  \def\cref#1{\ref{#1}}%
  \def\Cref#1{\ref{#1}}%
  \let\wrtusdrf\@gobble
\newcommand{\floor}[1]{\left\lfloor #1\right\rfloor}
\newcommand{\R}{\mathbb{R}}
\newcommand{\E}{\mathbb{E}}
\newcommand{\N}{\mathbb N}
\newcommand{\eps}{\varepsilon}
\newtheorem{theorem}{Theorem}
\newtheorem{lemma}[theorem]{Lemma}
\newtheorem{remark}{Remark}
\pgfplotsset{compat=1.17}
\title{On the Fourier transform of random Bernoulli convolutions}
\author[1]{Simon Baker\thanks{Research supported in part by EPSRC grant number EP/W003880/1.}}
\author[2]{Henna Koivusalo}
\author[3]{Sascha Troscheit\thanks{Research supported in part by the European Research Council Marie
Sk\l{}odowska--Curie Personal Fellowship \#101064701.}}
\author[2]{Xintian Zhang}
\affil[1]{Department of Mathematical Sciences, Loughborough University, Loughborough, LE11 3TU,
United Kingdom}
\affil[2]{School of Mathematics, University of Bristol, Bristol, BS8 1UG, United Kingdom}
\affil[3]{Department of Mathematics, Uppsala University, Box 480, 751 06 Uppsala, Sweden}
\begin{document}

\maketitle

\begin{abstract}
  We investigate random Bernoulli convolutions, namely, probability measures given by the infinite convolution
  \[
    \mu_\omega = \mathop{\circledast}_{k=1}^{\infty} \left( \frac{\delta_0 + \delta_{\lambda_1 \lambda_2 \ldots \lambda_{k-1} \lambda_k}}{2} \right),
  \]
  where $\omega=(\lambda_k)$ is a sequence of i.i.d. random variables each following the uniform distribution on some fixed interval. We study the
  regularity of these measures and prove that when $\exp\E\left( \log \lambda_1\right)>\frac{2}{\pi},$ the Fourier transform $\widehat{\mu}_\omega$ is an $L^{1}$ function almost surely. This in
  turn implies that the corresponding random self-similar set supporting $\mu_{\omega}$ has
  non-empty interior almost surely. This improves upon a previous bound due to Peres, Simon and Solomyak. Furthermore, under no assumptions on the value of
  $\exp \E(\log \lambda_1),$ we prove that $\widehat \mu_\omega$ will decay to zero at a polynomial rate
  almost surely. 
\end{abstract}

\section{Introduction}

The distribution of the random series $\sum_{k}\pm\lambda^k$, where the signs are chosen independently with
equal probabilities, has been studied for almost 100 years. It was observed in 1935 by Jessen and
Wintner \cite{jessen1935distribution} that the resulting measure $\nu_\lambda$, now known as the
{\it Bernoulli convolution}, is always either absolutely continuous or singular with respect to the
Lebesgue measure. It is easy to see that for any $\lambda<1/2$, the support  of
$\nu_\lambda$ is a set of Lebesgue measure $0$, and hence $\nu_{\lambda}$ is automatically singular,
but for $\lambda>1/2$ the situation is much more subtle.

There has been much progress on Bernoulli convolutions over the past century.
Erd\H{o}s proved in \cite{Erdos1939} that whenever $\lambda\in (1/2,1)$ is the reciprocal of a Pisot number, then $\nu_{\lambda}$
is singular. Complementary to this, Soloymak proved in \cite{solomyak1995random} that $\nu_{\lambda}$ is absolutely
continuous for Lebesgue almost every $\lambda\in(1/2,1).$ This was subsequently improved upon by
Shmerkin in \cite{Shmerkin2019} who proved that the set of exceptions to this statement is not only
of Lebesgue measure zero, but in fact has zero Hausdorff dimension. Specific examples of algebraic
$\lambda$ for which $\nu_{\lambda}$ is known to be absolutely continuous are due to Garsia
\cite{Garsia}, Varju \cite{VarjuAC}, and Kittle \cite{Kittle}. Despite these advances, it is still
not known whether $\lambda\in (1/2,1)$ exists for which $\lambda^{-1}$ is not a Pisot number and $\nu_{\lambda}$ is singular. We mention for
completeness that much more is known about the dimension of Bernoulli convolutions. Building upon
work of Hochman \cite{Hochman14}, Varju \cite{Varju2019} proved that whenever $\lambda\in (1/2,1)$ is
transcendental then $\nu_{\lambda}$ has dimension $1$.

Given the above discussion, it is natural to wonder what can cause $\nu_\lambda$ to be singular. If
$\lambda$ has enough algebraic rigidity, such as in the case when $\lambda^{-1}$ is a Pisot number, the
sums $\sum\pm\lambda^k$ can begin to accumulate disproportionately on parts of the line, which causes the measure
$\nu_\lambda$ to be singular. However, this kind of algebraic rigidity is rare.
It is easy to remove this algebraic rigidity by choosing the parameters $\lambda$ \emph{randomly}.
To be more precise, we replace the measure $\nu_\lambda$ by the measure
\[
  \mu_{\omega} = \mathop{\circledast}_{k=1}^{\infty} \left( \frac{\delta_0 + \delta_{\lambda_1 \lambda_2 \ldots \lambda_{k-1} \lambda_k}}{2} \right),  
\]
where the terms in the sequence $\omega=(\lambda_k)$ are independent of each other and are each distributed according to the uniform
distribution on some closed interval $W\subset (0,1)$. The measures $\mu_{\omega}$ will be our main object of study. Intuitively, adding randomness in this fashion should rule out the algebraic rigidity that was observed in the deterministic case and so removes the cause for the irregularity of the distribution. However, the analysis in this random setting is different to the
deterministic case and comes with its own challenges.
For example, for the classical Bernoulli convolution $\nu_\lambda$, the support is an
interval whenever $\lambda\geq 1/2$ and the challenge is finding the exact distribution within the
interval.
However, for a random measure $\mu_{\omega},$ the geometry of the support
can be more complicated. One of our main objectives is to establish conditions on the random
model that guarantee the existence of interior points in the support of $\mu_{\omega}$. This problem of finding
interior points in parameterised families of random fractal sets, has been attracting significant attention
lately, see e.g. \cite{DekkingSimonSzekely2024,Feng2022TypicalSS,BaranyRams2025} and the references therein. This study of random measures is
also motivated by the open question of whether there exist self-similar sets in $\mathbb{R}$ with positive Lebesgue
measure but empty interior \cite{PerSol98}.

To properly formulate and contextualise our results, it is necessary to give some definitions and a review of existing results.
Let $W=[\lambda_{\min},
\lambda_{\max}]\subset (0,1)$ denote some closed interval. Let $\omega=(\lambda_{k})$ denote a
sequence whose entries are chosen from $W$ independently with respect to the uniform measure. To
each $\omega=(\lambda_{k})$ we associate the following random set $$\Lambda_{\omega}=\left\{\sum_{j=1}^{\infty}a_{j}\prod_{k=1}^{j}\lambda_{k}:a_{j}\in \{0,1\}\,\forall k\in \N\right\}.$$ It is easy to show that $\Lambda_{\omega}$ is the support of $\mu_{\omega}$ for any $\omega$. Moreover, if we let $\pi_{\omega}:\{0,1\}^{\N}\to \Lambda_{\omega}$ be the map given by $$\pi_{\omega}((a_j))=\sum_{j=1}^{\infty}a_{j}\prod_{k=1}^{j}\lambda_{k},$$ then $\mu_{\omega}=\pi_{\omega}\nu$ where $\nu$ is the $(\frac{1}{2},\frac{1}{2})$ Bernoulli measure on $\{0,1\}^{\N}$. $\Lambda_{\omega}$ can be interpreted as a random analogue of a self-similar set and the measure $\mu_{\omega}$ can similarly be interpreted as a random analogue of a self-similar measure. For more on random self-similar sets and random self-similar measures, we refer the reader to \cite{Koivusalo2013, Troscheit2017, BaranyKaenmakiRams2025}. Denote by
$$
\lambda_g=\exp\E\left( \log \lambda_1\right)
$$ 
the (geometric) expectation of the contraction rates. Note
that for any $\omega$ we have $\Lambda_\omega\subset [0,(1-\lambda_{\max})^{-1}]$. If $\lambda_{\max}<1/2$, then
$\Lambda_{\omega}$ is a Cantor set and $\mu_{\omega}$ is singular for all $\omega$. As in the
deterministic case, when $W\cap [1/2, 1]\neq \emptyset$ the question of absolute continuity and
other regularity properties of the measure become non-trivial. It is clear that they will depend on the position
and size of $W$ within $(0,1)$, and hence on the parameter $\lambda_{g}$. It is a consequence of the convergence part of the Borel-Cantelli lemma that when $\lambda_g<\tfrac
12$, the support $\Lambda_\omega$ has zero Lebesgue measure almost surely, and hence
$\mu_\omega$ is almost surely singular. It was shown by Peres, Simon and Solomyak in \cite{RandomBernoulli} that when
\(\lambda_g>\frac{1}{2}\) then \(\mu_\omega \ll \mathcal{L}\) with a density in
\(L^2(\mathbb{R})\) almost surely,  and furthermore, if \(\lambda_g>\frac{e^{1/2}}{2}\approx 0.824\) then
\(\mu_\omega \ll \mathcal{L}\) with a continuous density, almost surely. Consequently, if $\lambda_{g}>\frac{e^{1/2}}{2}$ then $\Lambda_{\omega}$ will almost surely have non-empty interior.

Even though this problem of finding interior is geometric in nature, routes to finding interior points often rely on
techniques from Fourier analysis. This is also the case in the work of Peres, Simon and Solomyak
\cite{RandomBernoulli}. Recall that the \emph{Fourier transform} of a Borel probability measure $\mu$ is defined
as 
\[
  \widehat\mu(\xi) = \int e^{-2\pi i \xi x}d\mu(x). 
\] 
The regularity of the Fourier transform of a fractal measure is an indicator of the `smoothness' of the
supporting fractal set, and is also an object of interest in its own right. Studying the Fourier analytic properties of the
deterministic Bernoulli convolution has proven to be a fundamental tool dating back to the early works of
Erd\H{o}s and Kahane \cite{Erdos1939, Kahane1979}. The study of Fourier transforms of deterministic
fractal measures continues to be an active topic. We refer the interested reader to \cite{Sahlsten} for a recent comprehensive
survey.

In the set-up of the random Bernoulli convolution, the techniques of Peres, Simon and
Solomyak \cite{RandomBernoulli} relied on a Sobolev dimension estimate for the measure $\mu_\omega$.
We are able to improve upon it by finding bounds for the $L^1$-norm of $\widehat \mu_\omega$
directly. Consequently, we are able to improve the threshold for interior points given in
\cite{RandomBernoulli} from $\frac{e^{1/2}}{2}\approx 0.824$ to $\frac{2}{\pi}\approx 0.636$. Our main result is the following.
\begin{theorem}\label{main:cor}
 If $\lambda_g>\tfrac 2\pi$ then \(\widehat\mu_\omega \in {L^1(\R)}\) almost
	surely. 
\end{theorem}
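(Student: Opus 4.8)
The plan is to exploit that $\widehat{\tfrac12(\delta_0+\delta_a)}(\xi)=e^{-\pi i\xi a}\cos(\pi\xi a)$, so that, writing $a_k=\lambda_1\cdots\lambda_k$, one has $|\widehat\mu_\omega(\xi)|=\prod_{k=1}^\infty|\cos(\pi\xi a_k)|$, and it suffices to bound $\int_1^\infty|\widehat\mu_\omega(\xi)|\,d\xi$. Since $a_k$ is built from $\lambda_1,\dots,\lambda_{k-1}$ together with $\lambda_k$, and $\lambda_k$ is independent of $\lambda_1,\dots,\lambda_{k-1}$ and uniform on $W$,
\[
 \E\bigl[\,|\cos(\pi\xi a_k)|\ \big|\ \lambda_1,\dots,\lambda_{k-1}\bigr]
 =\frac{1}{\lambda_{\max}-\lambda_{\min}}\int_{\lambda_{\min}}^{\lambda_{\max}}\!|\cos(\pi\xi a_{k-1}\lambda)|\,d\lambda
 \le \frac{2}{\pi}+\frac{C_W}{\xi a_{k-1}},
\]
because $|\cos(\pi t)|$ is $1$-periodic with mean $\int_0^1|\cos(\pi t)|\,dt=\tfrac2\pi$, so an average of $|\cos(\pi t)|$ over an interval of length $L$ differs from $\tfrac2\pi$ by $O(1/L)$. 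This is precisely the source of the constant $\tfrac2\pi$.

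Fix a large $T$ and, for $\xi\ge T$, set $\tau=\tau(\xi,\omega)=\inf\{k:\xi a_k<T\}$, which is almost surely finite and satisfies $\xi a_{k-1}\ge T$ for all $k\le\tau$. Reading the products backwards from $\tau$, using $a_{k-1}/a_k=\lambda_k^{-1}\ge\lambda_{\max}^{-1}>1$, one gets $\sum_{k\le\tau}(\xi a_{k-1})^{-1}$ bounded by a constant depending only on $W$ and $T$, hence $\prod_{k\le\tau}(1+C_W(\xi a_{k-1})^{-1})\le K_T$. Therefore $\prod_{k\le n\wedge\tau}|\cos(\pi\xi a_k)|\big/\bigl[(\tfrac2\pi)^{n\wedge\tau}\prod_{k\le n\wedge\tau}(1+C_W(\xi a_{k-1})^{-1})\bigr]$ is a nonnegative supermartingale equal to $1$ at $n=0$, and letting $n\to\infty$ yields $\E\bigl[(\tfrac\pi2)^{\tau}\prod_{k\le\tau}|\cos(\pi\xi a_k)|\bigr]\le K_T$. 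Since $|\widehat\mu_\omega(\xi)|\le\prod_{k\le\tau}|\cos(\pi\xi a_k)|=(\tfrac2\pi)^{\tau}\cdot(\tfrac\pi2)^{\tau}\prod_{k\le\tau}|\cos(\pi\xi a_k)|$, the problem reduces to controlling $\tau$. By the law of large numbers $\tfrac1k\log a_k\to\log\lambda_g$ almost surely, so for typical $\omega$ one has $\tau\approx\log(\xi/T)/\log(1/\lambda_g)$ and hence $(\tfrac2\pi)^{\tau}\approx\xi^{-\log(\pi/2)/\log(1/\lambda_g)}$; this exponent exceeds $1$ exactly when $\lambda_g>\tfrac2\pi$.

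To make this rigorous I would decompose $[1,\infty)$ into dyadic blocks $I_m=[2^m,2^{m+1})$ and, on $I_m$, split over the random walk $S_k=\log a_k$: on the ``typical'' event $G_m$ that $S_k$ stays above the line $k\mapsto rk$ on the relevant range of $k$, for a fixed $r$ with $\log\tfrac2\pi<r<\log\lambda_g$, one gets $\tau\gtrsim m\log2/(-r)$ throughout $I_m$, and combining with the previous display makes $\E\!\int_{I_m}|\widehat\mu_\omega|$ summable in $m$ — here one uses $-r<\log(\pi/2)$, which is available exactly because $\lambda_g>\tfrac2\pi$ — while $\mathbb{P}(G_m^{c})$ decays exponentially in $m$ by Cramér's theorem, since $r<\log\lambda_g=\E[\log\lambda_1]$. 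Estimating the contribution of $G_m^{c}$ and of the resonant frequencies in each block, and feeding this into the Borel--Cantelli lemma, gives $\int_1^\infty|\widehat\mu_\omega(\xi)|\,d\xi<\infty$ almost surely; the same scheme, run with no lower restriction on $r$, produces a polynomial decay rate for $\widehat\mu_\omega$ for every value of $\lambda_g$.

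The hard part is the control of the atypical realisations. The expectation $\E\,|\widehat\mu_\omega(\xi)|$ decays only like $\xi^{-\theta^*}$, where $\E[\lambda_1^{-\theta^*}]=\pi/2$, so a Fubini/first-moment argument proves $\widehat\mu_\omega\in L^1$ only under the strictly stronger hypothesis $\E[\lambda_1^{-1}]<\pi/2$: the realisations on which some $\lambda_k$ are atypically small force the $a_k$ to decay too fast, $\tau$ to be too small, and the Fourier bound to be too weak, and they are not negligible on average. Reaching the sharp threshold $\lambda_g>\tfrac2\pi$ therefore requires genuinely quenched estimates — in effect an Erdős--Kahane type argument showing that almost surely the random frequencies $(a_k)$ exhibit no Pisot-like near-coincidences in any dyadic block — and the delicate balance is between the loss in the Fourier bound on $G_m^{c}$ and the exponential smallness of $\mathbb{P}(G_m^{c})$, which is what pins the constant at $\tfrac2\pi$.
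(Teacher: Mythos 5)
Your argument is essentially sound and, at its core, rests on the same mechanism as the paper's proof: conditionally on $\lambda_1,\ldots,\lambda_{k-1}$, the phase $\pi\xi a_k$ is nearly equidistributed modulo $\pi$ whenever $\xi a_{k-1}$ is large, so each factor contributes roughly $2/\pi$ on average; one then restricts to a law-of-large-numbers event guaranteeing that for $\xi\approx\lambda_g^{-i}$ about $i$ factors are usable before $\xi a_k$ drops below a fixed threshold, and integrating the resulting bound $(2/\pi)^{ci}$ over an interval of length $\approx\lambda_g^{-i}$ is exactly what produces the threshold $\lambda_g>\tfrac2\pi$. Where you differ is in the packaging. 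You feed the per-factor estimate $\E\bigl[|\cos(\pi\xi a_k)|\mid\mathcal F_{k-1}\bigr]\le\tfrac2\pi+C_W(\xi a_{k-1})^{-1}$ into a stopping time $\tau$ and a supermartingale, getting $\E\bigl[(\tfrac\pi2)^{\tau}\prod_{k\le\tau}|\cos(\pi\xi a_k)|\bigr]\le K_T$ by optional stopping; the paper instead discretises the values of $|\cos|$ into $\arccos$-level sets (the sets $V(h,M)$ and $L_{h_1,\ldots,h_{E_i}}$) and multiplies conditional probabilities (\cref{lem:aibound,lem:msr,lem:value}), recovering the constant $2/\pi$ through a Riemann-sum limit in $M$; your martingale route is shorter and avoids the auxiliary parameter $M$. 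You then close via dyadic blocks, per-block good events $G_m$ and Borel--Cantelli, whereas the paper works on the single good set $B_\eps(n)$ of \cref{eq:defB}, applies Fubini there (\cref{lem:reduce,lem:ai}) and takes a union over $n$, so no Borel--Cantelli is needed for this theorem. Two points to tighten. First, exponential decay of $\mathbb P(G_m^c)$ via Cram\'er fails if $G_m$ demands $S_k\ge rk$ from $k=1$ onwards: when $\log\lambda_{\min}<r$ the event $\{\log\lambda_1<r\}$ has fixed positive probability, so you must impose the barrier only for $k\ge\delta m$; this is harmless because $S_k\ge k\log\lambda_{\min}$ already prevents $\tau$ from being reached before roughly $m\log 2/|\log\lambda_{\min}|$, and it mirrors exactly the role of the index $n$ in $B_\eps(n)$. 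Second, your closing claim that reaching $\tfrac2\pi$ requires an Erd\H{o}s--Kahane-type analysis of near-coincidences overstates what is needed: both your outline and the paper's proof of this theorem are first-moment arguments restricted to a full-measure family of good events; the Erd\H{o}s--Kahane scheme enters only in the proof of \cref{main3}.
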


If \(\widehat{\mu} \in L^1(\mathbb{R})\), then
\(\mu\) is absolutely continuous with respect to the Lebesgue measure and has a continuous density. For a proof of this fact see \cite[Theorem 3.4]{mattila_2015}. Using this result we see that \cref{main:cor} immediately implies the following statement. 
\begin{theorem}\label{main}
  If $\lambda_g>\tfrac 2\pi$ then \(\mu_\omega \ll \mathcal{L}\) with a continuous density almost
  surely. Thus $\Lambda_{\omega}$ almost surely has non-empty interior.
\end{theorem}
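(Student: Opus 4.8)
Since \cref{main} is an immediate consequence of \cref{main:cor} together with \cite[Theorem 3.4]{mattila_2015}, the plan is to prove \cref{main:cor}. Write $\Pi_k=\lambda_1\lambda_2\cdots\lambda_k$, so that the Fourier transform has the product form
\[
  \widehat\mu_\omega(\xi)=\prod_{k=1}^{\infty}\frac{1+e^{-2\pi i\xi\Pi_k}}{2}=\prod_{k=1}^{\infty}e^{-\pi i\xi\Pi_k}\cos(\pi\xi\Pi_k),\qquad\text{hence}\qquad|\widehat\mu_\omega(\xi)|=\prod_{k=1}^{\infty}|\cos(\pi\xi\Pi_k)|.
\]
Everything in sight is nonnegative, so by Tonelli it suffices to prove $\int_{\R}\E[\,|\widehat\mu_\omega(\xi)|\,]\,d\xi<\infty$: then $\int_\R|\widehat\mu_\omega|$ is a nonnegative random variable with finite mean, hence finite almost surely, giving $\widehat\mu_\omega\in L^1(\R)$. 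Writing $F(\xi):=\E[\,|\widehat\mu_\omega(\xi)|\,]$ and noting $F\le1$, the task reduces to a polynomial bound $F(\xi)\le C|\xi|^{-\alpha}$ for $|\xi|\ge1$ with some $\alpha>1$. (The same mechanism, carried out pathwise and summed along dyadic scales, should also yield the unconditional polynomial decay of $\widehat\mu_\omega$ promised in the abstract.)

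To prove the polynomial bound I would iterate the self-similar identity $|\widehat\mu_\omega(\xi)|=|\cos(\pi\xi\lambda_1)|\cdot|\widehat\mu_{\sigma\omega}(\lambda_1\xi)|$, where $\sigma$ is the shift; taking expectations and using independence of $\sigma^n\omega$ from $(\lambda_1,\dots,\lambda_n)$ gives $F(\xi)=\E\big[\prod_{k=1}^n|\cos(\pi\xi\Pi_k)|\cdot F(\xi\Pi_n)\big]\le\E\big[\prod_{k=1}^n|\cos(\pi\xi\Pi_k)|\big]$. The elementary input is the averaging estimate
\[
  \frac1{|W|}\int_W|\cos(\pi y\lambda)|\,d\lambda\le\min\!\Big\{1,\ \tfrac2\pi+\tfrac{2}{\pi y|W|}\Big\}\qquad(y>0),
\]
valid because $|\cos(\pi\,\cdot\,)|$ has period $1$ and mean $\tfrac2\pi$. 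Integrating out $\lambda_k$ against the single factor $|\cos(\pi\xi\Pi_{k-1}\lambda_k)|$ (bounding the other factors by $1$ in that one integration) therefore contributes a factor close to $\tfrac2\pi$ whenever $\xi\Pi_{k-1}$ is large; doing this at the indices $k\le n$ where $\xi\Pi_k$ is large produces $\E\big[\prod_{k=1}^n|\cos(\pi\xi\Pi_k)|\big]\lesssim(\tfrac2\pi)^{m}$, with $m$ the number of such indices. Since $\log\Pi_k=\sum_{j\le k}\log\lambda_j$ is a random walk with negative drift $\log\lambda_g$, the law of large numbers gives $\Pi_k\approx\lambda_g^{\,k}$, so taking $n\asymp\log|\xi|/\log(1/\lambda_g)$ one expects $m\asymp\log|\xi|/\log(1/\lambda_g)$ and
\[
  F(\xi)\lesssim\Big(\tfrac2\pi\Big)^{\log|\xi|/\log(1/\lambda_g)\,(1+o(1))}=|\xi|^{-\log(\pi/2)/\log(1/\lambda_g)+o(1)}.
\]
As $\log(\pi/2)/\log(1/\lambda_g)=\log(\pi/2)/\E[\log(1/\lambda_1)]$, the exponent exceeds $1$ precisely when $\E[\log(1/\lambda_1)]<\log(\pi/2)$, i.e.\ when $\lambda_g=\exp\E(\log\lambda_1)>\tfrac2\pi$; under the hypothesis this produces $\alpha>1$, and summing $F(\xi)\le C|\xi|^{-\alpha}$ over $|\xi|\ge1$ (plus the trivial contribution on $[-1,1]$) gives $\int_\R F<\infty$, completing the proof.

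The step I expect to be the main obstacle is turning the ``$m$ large factors'' heuristic into a rigorous estimate, that is, controlling the fluctuations of the random walk $\log\Pi_k$. If one crudely replaces each $\lambda_j$ by $\lambda_{\min}$ when handling the error term $\tfrac{2}{\pi y|W|}$, these errors compound and the argument only recovers the far stronger condition $\E[\lambda_1^{-1}]<\pi/2$; likewise a single rescaling $\xi\mapsto\lambda_1^{-1}u$ leads to a threshold governed by $\E[\lambda_1^{-1}]$ rather than by $\lambda_g$. Recovering the sharp constant $\tfrac2\pi$ requires genuinely exploiting that trajectories along which $\Pi_k$ decays appreciably faster than $\lambda_g^{\,k}$ are exponentially rare. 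Concretely, I would decompose $\E\big[\prod_{k=1}^n|\cos(\pi\xi\Pi_k)|\big]$ according to a stopping time such as $\tau=\min\{k:\Pi_k<\theta/|\xi|\}$ (or according to whether $\log\Pi_k$ ever strays more than $\eps n$ from its mean), bound the ``bad'' part by $1$ times a large-deviation probability, and run the averaging argument on the ``good'' part, on which $\xi\Pi_k$ stays large enough that the accumulated error terms are summable. One must also verify that the typical scaling $\Pi_k\approx\lambda_g^{\,k}$ is the case that binds, so that both faster-decaying trajectories (fewer oscillating factors, larger $(\xi\Pi_n)^{-\alpha}$) and slower-decaying ones (more residual mass) contribute negligibly once weighed against their probabilistic cost.
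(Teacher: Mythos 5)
Your reduction of \cref{main} to \cref{main:cor} plus \cite[Theorem 3.4]{mattila_2015} is exactly the paper's step, and your product formula and the ``each integrated-out cosine factor costs about $2/\pi$'' mechanism are the same engine as \cref{lem:value}. The genuine gap is in the probabilistic reduction. You propose to prove $\int_\R \E\big[\,|\widehat\mu_\omega(\xi)|\,\big]\,d\xi<\infty$, i.e.\ finiteness of the \emph{expected} $L^1$-norm, and to handle trajectories whose products $\Pi_k$ deviate from $\lambda_g^{k}$ by bounding their contribution ``by $1$ times a large-deviation probability''. This does not close. With $n\asymp \log|\xi|/\log(1/\lambda_g)$, the probability that $\log\Pi_k$ strays by more than $\eps n$ is only about $e^{-c(\eps)n}$ with $c(\eps)\asymp\eps^{2}$, i.e.\ a factor $|\xi|^{-c(\eps)/\log(1/\lambda_g)}$ with a tiny exponent, while integrability over $\xi$ demands total exponent $>1$; and to make the good-part exponent $(1-O(\eps))\log(\pi/2)/\log(1/\lambda_g)$ exceed $1$ when $\lambda_g$ is close to $2/\pi$ you are forced to take $\eps$ small, which makes the bad-part exponent useless. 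Worse, the quantity you reduced to is heuristically infinite near the threshold: decomposing the expectation according to the empirical rate $\lambda$ of the first $\sim\log|\xi|/\log(1/\lambda)$ contractions gives $\E|\widehat\mu_\omega(\xi)|\approx|\xi|^{-\beta}$ with $\beta=\min_{\lambda}\,[\,I(\log\lambda)+\log(\pi/2)\,]/\log(1/\lambda)$ ($I$ the rate function of $\log\lambda_1$), and since $I$ vanishes quadratically at $\log\lambda_g$ while the denominator grows linearly, $\beta$ drops below $1$ as soon as $\lambda_g$ is only slightly larger than $2/\pi$. So the sufficient condition you aim at is strictly stronger than \cref{main:cor} and is expected to fail precisely in part of the claimed range.

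The paper avoids this by never weighing the atypical trajectories against their probability inside the $\xi$-integral: for fixed $\eps$ and $n$ it restricts the $\omega$-integration to the event $B_\eps(n)$ of \cref{eq:defB}, on which $\Pi_m\in[\lambda_g^{(1+\eps)m},\lambda_g^{(1-\eps)m}]$ for all $m\ge n$, proves $\int_1^\infty\int_{B_\eps(n)}\prod_j|\cos(\pi\xi\Pi_j)|\,d\mathbb{P}\,d\xi<\infty$ (via \cref{lem:reduce,lem:ai,lem:aibound,lem:msr,lem:value}, where your averaging estimate reappears as a conditional, filtration-based bound discretized along an $\arccos$ grid), and then applies Fubini on that event alone; since $\mathbb{P}(\cup_n B_\eps(n))=1$ by the law of large numbers, the almost sure conclusion follows with the bad trajectories discarded as a null set rather than integrated. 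To salvage your scheme you would have to condition away the bad set \emph{before} integrating in $\xi$ --- which is essentially the paper's $B_\eps(n)$ device --- not bound its contribution by its probability.
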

We note that, in general, it is not the case that whenever some measure $\mu$ has continuous
density, it  must follow that $\widehat\mu\in L^1(\R)$. We also
emphasise that we cannot say whether the parameter $\frac{2}{\pi}$ appearing in 
\cref{main,main:cor} is optimal.
It is an interesting problem to determine what the optimal thresholds are for these theorems.

It turns out that our proof technique for \cref{main:cor} can be modified to give another result on
the regularity of $\mu_\omega$. We call a measure $\mu$ a \emph{Rajchman measure} if its Fourier
transform decays to $0$ as $|\xi|\to \infty$. We say that  the Fourier transform of a
measure $\widehat \mu$ has {\it polynomial decay}, if, for some $C, \rho>0$, 
\[
  |\widehat \mu(\xi)|\le C|\xi|^{-\rho}
\]
for all $\xi\neq 0$. Determining whether a measure is Rajchman, and if it Rajchman, the speed at which it converges to zero, is an important problem connecting many distinct areas of mathematics. For instance, it plays an important role in the uniqueness problem from Fourier analysis \cite{KL92,Salem}, detecting patterns in fractal sets \cite{LaPr}, and finding normal numbers in fractal sets \cite{DEL,PVZZ}. Understanding the decay properties of the Fourier transform of a deterministic fractal measure has received significant attention recently. We refer the reader to \cite{ACWW,ARHW,BakerSahlsten,BakBan25,BakerKhalilSahlsten,LPS25,LiSah20,LiSah22,Sahlsten,Streck} and the references therein for a sample of recent results. 
In this paper, we will prove that the Fourier transform of a random Bernoulli convolution will almost surely have polynomial decay. Moreover, the decay exponent can be chosen independently of $\omega$.
\begin{theorem}\label{main3}
There exists $\rho>0$ such that for almost all $\omega$, there exists $C>0$ such that 
$$|\hat{\mu}_{\omega}(\xi)|\le C|\xi|^{-\rho}$$
for all $\xi\neq 0$.
\end{theorem}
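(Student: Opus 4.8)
The plan is to obtain a decay bound on the fractional moments $\E\bigl[|\widehat{\mu}_\omega(\xi)|^{s}\bigr]$ at a single frequency $\xi$, to convert it into a pointwise large-deviation estimate via Markov's inequality, and then to promote it to an almost sure statement over each dyadic band of frequencies by a union bound along a fine net --- using that $\widehat{\mu}_\omega$ is Lipschitz with a constant independent of $\omega$ --- and finally to invoke the Borel--Cantelli lemma. This follows the same scheme as the argument for \cref{main:cor}; the key modification is that, rather than working with the first moment (which only yields integrability, and only once $\lambda_g>\tfrac{2}{\pi}$), we let the moment exponent $s$ tend to infinity, and, to keep the argument valid for \emph{every} admissible $W$, we are content to discard all but a block of $\asymp\log|\xi|/\log(1/\lambda_{\min})$ of the factors, accepting a worse but still positive exponent $\rho$. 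Since $\mu_\omega$ is the law of $\sum_{j\ge 1}a_j t_j$ with $t_j=t_j(\omega):=\lambda_1\cdots\lambda_j$ and $(a_j)$ i.i.d.\ fair coins, one has $\widehat{\mu}_\omega(\xi)=\prod_{k\ge 1}\tfrac{1+e^{-2\pi i\xi t_k}}{2}$, hence $|\widehat{\mu}_\omega(\xi)|=\prod_{k\ge1}|\cos(\pi\xi t_k)|\le\prod_{k=1}^{N}|\cos(\pi\xi t_k)|$ for every $N\ge1$. Two elementary facts will be used: (i) the ``average decay'' $\Psi(s):=\int_0^1|\cos\pi u|^{s}\,du$ tends to $0$ as $s\to\infty$ (indeed $\Psi(s)=\Gamma(\tfrac{s+1}{2})/(\sqrt{\pi}\,\Gamma(\tfrac s2+1))\asymp s^{-1/2}$); (ii) since $\Lambda_\omega\subset[0,(1-\lambda_{\max})^{-1}]$ for every $\omega$, the function $\widehat{\mu}_\omega$ is $L$-Lipschitz with $L:=2\pi(1-\lambda_{\max})^{-1}$, uniformly in $\omega$.

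For the moment estimate, fix a large $\xi$ and a constant $M=M(W)\ge\max\{1,\,2/(\lambda_{\max}-\lambda_{\min})\}$, and set $N=N(\xi):=\bigl\lfloor\log(\xi/M)/\log(1/\lambda_{\min})\bigr\rfloor$. The crucial point of this choice is that, \emph{for every $\omega$} and every $1\le k\le N$, we have $\xi t_{k-1}\ge\xi\lambda_{\min}^{\,k-1}\ge M$, so no information on the random walk $\log t_k=\sum_{j\le k}\log\lambda_j$ is needed. Conditioning on $\mathcal F_{k-1}:=\sigma(\lambda_1,\dots,\lambda_{k-1})$, the variable $\xi t_k=(\xi t_{k-1})\lambda_k$ is uniform on an interval of length $\ge M(\lambda_{\max}-\lambda_{\min})\ge 2$, and periodicity of $u\mapsto|\cos\pi u|^{s}$ gives $\E[\,|\cos(\pi\xi t_k)|^{s}\mid\mathcal F_{k-1}\,]\le\tfrac32\Psi(s)$ for all $s>0$. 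Applying this for $k=N,N-1,\dots,1$ and using the tower property,
\[
 \E\bigl[|\widehat{\mu}_\omega(\xi)|^{s}\bigr]\;\le\;\E\Bigl[\,\prod_{k=1}^{N}|\cos(\pi\xi t_k)|^{s}\Bigr]\;\le\;\bigl(\tfrac32\Psi(s)\bigr)^{N(\xi)}.
\]
By (i), fix $s_0=s_0(W)$ so large that $\tfrac32\Psi(s_0)<\lambda_{\min}^{4}$; since $N(\xi)\ge\log(\xi/M)/\log(1/\lambda_{\min})-1$ and $\lambda_{\min}^{1/\log(1/\lambda_{\min})}=e^{-1}$, the display gives $\E\bigl[|\widehat{\mu}_\omega(\xi)|^{s_0}\bigr]\le(M/\lambda_{\min})^{4}\,\xi^{-4}$. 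Markov's inequality with threshold $\xi^{-\rho_0}$, where $\rho_0:=2/s_0>0$, then yields the pointwise tail bound $\mathbb P\bigl(|\widehat{\mu}_\omega(\xi)|>\xi^{-\rho_0}\bigr)\le(M/\lambda_{\min})^{4}\,\xi^{-2}$.

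Fix now $0<\rho<\rho_0$. For each $n$ cover $\{2^n\le|\xi|\le 2^{n+1}\}$ by a net $\{\xi_i\}$ of spacing $2^{-n\rho}/L$ (of cardinality $O(2^{n(1+\rho)})$); by (ii), if $|\widehat{\mu}_\omega|>2^{-n\rho}$ somewhere on this band then $|\widehat{\mu}_\omega(\xi_i)|>\tfrac12 2^{-n\rho}\ge|\xi_i|^{-\rho_0}$ for some $i$, once $n$ is large (as $\rho<\rho_0$). A union bound with the tail estimate and $|\xi_i|\ge 2^n$ gives $\mathbb P\bigl(\sup_{2^n\le|\xi|\le 2^{n+1}}|\widehat{\mu}_\omega(\xi)|>2^{-n\rho}\bigr)=O\bigl(2^{n(1+\rho)}\cdot 2^{-2n}\bigr)=O\bigl(2^{-n(1-\rho)}\bigr)$, which is summable. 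Borel--Cantelli, together with $\widehat{\mu}_\omega(-\xi)=\overline{\widehat{\mu}_\omega(\xi)}$ and the trivial bound $|\widehat{\mu}_\omega|\le 1$, then gives $|\widehat{\mu}_\omega(\xi)|\le C_\omega|\xi|^{-\rho}$ for all $\xi\ne 0$, almost surely, with $\rho=\rho(W)>0$ independent of $\omega$.

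The main obstacle is the moment estimate, and specifically the need to take $s\to\infty$: a bound at any fixed moment $s$ --- equivalently, the mere fact that a fixed positive proportion of the factors $|\cos(\pi\xi t_k)|$ are bounded away from $1$ --- gives a tail exponent governed by $\Psi(s)$ for that $s$, which may fail to exceed $1+\rho$ and hence to survive the union bound over the $\asymp 2^{n(1+\rho)}$ net points; only $\Psi(s)\to 0$ pushes the exponent of $\xi$ in the tail bound above $1$. A secondary point is the deliberate restriction to the first $N(\xi)\asymp\log|\xi|/\log(1/\lambda_{\min})$ factors, where $\xi t_{k-1}\ge M$ holds for every $\omega$; this circumvents any large-deviation analysis of $\log t_k$ at the price of a smaller $\rho$, which is acceptable since only $\rho>0$ is required.
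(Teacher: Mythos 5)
Your proposal is correct, and its core probabilistic mechanism is genuinely different from the paper's, even though the outer structure (product formula for $|\widehat\mu_\omega|$, truncation of the product at $\asymp\log|\xi|$ factors, a discretisation of frequencies combined with the uniform Lipschitz bound on $\widehat\mu_\omega$, and Borel--Cantelli over dyadic bands) coincides. The paper runs an Erd\H{o}s--Kahane-type counting argument: it restricts to the law-of-large-numbers events $B_\eps(n)$, $\mathcal A_\eps(k)$ so that $\xi\prod_{k\le j}\lambda_k$ is comparable to $\lambda_g^{-\eps^2 i}$-type quantities, proves a conditional hitting-probability bound (\cref{Lemma:prob2}) for the phase landing near $\mathbb Z\pi$, and then uses a binomial/Chernoff estimate to show that a fixed proportion $\alpha$ of the first $E_i$ phases lie in $V_\theta$, giving $|\widehat\mu_\omega(\xi_{i,l})|\le(\cos\theta)^{\alpha i}$; the grid density $X(i)$ must then be tuned against the Chernoff exponent (\cref{Lemma:choose X}). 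You instead prove a fractional-moment bound: truncating at $N(\xi)\asymp\log\xi/\log(1/\lambda_{\min})$ makes $\xi t_{k-1}\ge M$ hold \emph{deterministically}, so conditionally on $\mathcal F_{k-1}$ the phase $\xi t_k$ is uniform on an interval spanning at least two periods, whence $\E[|\cos(\pi\xi t_k)|^{s}\mid\mathcal F_{k-1}]\le\tfrac32\Psi(s)$ with $\Psi(s)\to0$; choosing $s_0$ with $\tfrac32\Psi(s_0)<\lambda_{\min}^4$ and applying the tower property and Markov gives a tail bound $\ll\xi^{-2}$ that survives the union bound over the net. This buys a substantially leaner argument: no LLN sets, no counting of good indices, no Chernoff bound, and no delicate choice of the grid function $X$ (a fixed polynomial net suffices since the tail exponent $2$ beats the net cardinality exponent $1+\rho$). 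What it gives up is quantitative sharpness: your exponent $\rho_0=2/s_0$ is tied to $\lambda_{\min}$ through the wasteful deterministic truncation, whereas the paper's exponent is expressed through $\lambda_g$, $\theta$, $\alpha$; since only some $\rho>0$ is claimed, this loss is immaterial. Minor points to tidy in a write-up: take the net so that every $\xi$ in the band is within half the spacing of a net point (or adjust the constant $\tfrac12$), and take $\rho<\min\{\rho_0,1\}$ so that the union bound $O(2^{-n(1-\rho)})$ is indeed summable; neither affects the validity of the argument.
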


We finish this introductory section by remarking that it is possible to consider measures $\mu_{\omega}$ also in the case where $\lambda_{\max}>1$.
As long as $\lambda_g<1$, the arguments in the following sections go through with only minor
technical changes but for simplicity we restrict to the case where $\lambda_{\max}<1$.

What remains of the paper will be structured as follows. The proof of \cref{main:cor} is contained in \cref{sec:main}. We will then adapt the argument used in \cref{sec:main} to prove \cref{main3} in \cref{sec:main3}.

\section{Proof of  \cref{main:cor}}\label{sec:main}
In this section we show that for almost every $\omega \in W^{\mathbb{N}}$,  the measure  $\mu_{\omega}$ on
$\Lambda_\omega$ has an $L^1$ Fourier transform. Recall that elements of $W^{\mathbb N}$ are
sequences $(\lambda_k)$, and that the probability measure $\mathbb P$ on $W^{\mathbb{N}}$ is a
product of normalised Lebesgue measures each supported on the interval $W$. 

The following lemma expressing the Fourier transform as a product is standard and can be found in,
e.g.  \cite{mattila_2015}. We include the proof for the reader's convenience.
\begin{lemma}\label{lem:form} Let $\omega\in W^{\N}$. For every $\xi\in \R$ we have \[
  |\widehat{\mu}_\omega(\xi)|=\prod_{j=1}^{\infty}\left|\cos\left(\pi \xi\prod_{k=1}^{j}\lambda_{k}
\right)\right|.  \] \end{lemma}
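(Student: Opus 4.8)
The plan is to write $\mu_\omega$ as a limit of finite convolutions and use the elementary fact that the Fourier transform sends a convolution of probability measures to the product of their Fourier transforms. Put $c_j=\prod_{k=1}^{j}\lambda_k$, and for $N\in\N$ let
\[
  \mu_\omega^{(N)}=\mathop{\circledast}_{j=1}^{N}\frac{\delta_0+\delta_{c_j}}{2},
\]
which is the pushforward of the Bernoulli measure $\nu$ under the truncated projection $\pi_\omega^{(N)}\colon(a_j)\mapsto\sum_{j=1}^{N}a_jc_j$. Since $\lambda_{\max}<1$ we have $c_j\le\lambda_{\max}^{j}$, so all of the $\mu_\omega^{(N)}$ and $\mu_\omega$ are supported in the fixed compact interval $[0,(1-\lambda_{\max})^{-1}]$, and $\pi_\omega^{(N)}\to\pi_\omega$ uniformly on $\{0,1\}^{\N}$; in particular no issues arise at infinity.

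The next step is the single-factor computation: for any real $c$,
\[
  \widehat{\left(\frac{\delta_0+\delta_{c}}{2}\right)}(\xi)=\frac{1}{2}\left(1+e^{-2\pi i\xi c}\right)=e^{-\pi i\xi c}\cos(\pi\xi c).
\]
Since the Fourier transform of a finite convolution of probability measures is the product of the individual Fourier transforms, this gives
\[
  \widehat{\mu_\omega^{(N)}}(\xi)=\prod_{j=1}^{N}e^{-\pi i\xi c_j}\cos(\pi\xi c_j),
  \qquad\text{and hence}\qquad
  \left|\widehat{\mu_\omega^{(N)}}(\xi)\right|=\prod_{j=1}^{N}\left|\cos(\pi\xi c_j)\right|.
\]

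Finally I would let $N\to\infty$. Fix $\xi\in\R$. The function $x\mapsto e^{-2\pi i\xi x}$ is Lipschitz with constant $2\pi|\xi|$, and $\left|\pi_\omega^{(N)}-\pi_\omega\right|\le\sum_{j>N}c_j\le\lambda_{\max}^{N+1}/(1-\lambda_{\max})$ everywhere on $\{0,1\}^{\N}$, so
\[
  \left|\widehat{\mu_\omega^{(N)}}(\xi)-\widehat{\mu_\omega}(\xi)\right|
  =\left|\int\left(e^{-2\pi i\xi\pi_\omega^{(N)}}-e^{-2\pi i\xi\pi_\omega}\right)d\nu\right|
  \le 2\pi|\xi|\int\left|\pi_\omega^{(N)}-\pi_\omega\right|d\nu
  \le\frac{2\pi|\xi|\,\lambda_{\max}^{N+1}}{1-\lambda_{\max}},
\]
which tends to $0$ as $N\to\infty$. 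On the other hand the partial products $\prod_{j=1}^{N}|\cos(\pi\xi c_j)|$ decrease in $N$ and are bounded below by $0$, hence converge to $\prod_{j=1}^{\infty}|\cos(\pi\xi c_j)|\in[0,1]$. Taking moduli in the finite-product formula and passing to the limit yields the claimed identity. There is no genuine obstacle here; the only point that needs a line of justification is the interchange of limit and integral, which is covered by the Lipschitz estimate above (equivalently, by weak convergence of $\mu_\omega^{(N)}$ to $\mu_\omega$ together with continuity and boundedness of the exponential).
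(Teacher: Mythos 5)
Your proof is correct and follows essentially the same route as the paper: both rest on the single-factor identity $\tfrac{1}{2}\left(1+e^{-2\pi i\xi c}\right)=e^{-\pi i\xi c}\cos(\pi\xi c)$ together with the product/independence structure of $\mu_\omega$ (the paper factors the integral over $\{0,1\}^{\N}$ under $\pi_\omega\nu$ directly, which is the same computation as your convolution-theorem step). The only difference is presentational: you make explicit, via the truncation $\mu_\omega^{(N)}$ and the Lipschitz estimate, the passage to the infinite product that the paper treats as standard.
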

\begin{proof}
 Let $\omega\in W^\N$ and $\nu$ be the $(\tfrac 12, \tfrac 12)$-Bernoulli measure on $\{0,1\}^{\N}.$ Recalling that $\mu_{\omega}=\pi_{\omega}\nu,$ we have
  \begin{align*}
    |\widehat{\mu}_\omega(\xi)|
    = \Bigg|\int e^{-2\pi i \xi x}d\mu_{\omega}(x)\Bigg|
    &=\Bigg|\int e^{-2\pi i \xi \sum_{j=1}^{\infty} a_{j}\prod_{k=1}^{j}\lambda_{k}}
    \,d{\nu((a_j))}\Bigg|\\
    &= \prod_{j=1}^{\infty}\left|\frac{1}{2}(1+e^{-2\pi i \xi \prod_{k=1}^{j}\lambda_{k} })\right|\\
    &=\prod_{j=1}^{\infty}\left|\frac{1}{2}(e^{\pi i \xi \prod_{k=1}^{j}\lambda_{k} }+e^{-\pi i
    \xi \prod_{k=1}^{j}\lambda_{k} })\right|\\
    &=\prod_{j=1}^{\infty}\left|\cos\left(\pi \xi\prod_{k=1}^{j}\lambda_{k} \right)\right|. 
  \end{align*} 
  Here the penultimate line follows from multiplying by $|e^{\pi i \xi \prod_{k=1}^{j}\lambda_{k} }|=1$. 
\end{proof}
It follows from \cref{lem:form} that to prove Theorem \ref{main:cor} we only need to show that 
\begin{equation}
  \int \prod_{j=1}^{\infty}\left|\cos\left(\pi \xi\prod_{k=1}^{j}\lambda_{k} \right)\right|\,d\xi<\infty
  \nonumber
\end{equation} for almost every $\omega$. We will estimate the average $L^1$-norm of $\widehat\mu_\omega$ from this expression more or less
directly, but we first need to establish some further notation.

It is clear from the above lemma that the $L^1$-bound for $\widehat \mu_\omega$ will likely depend upon the size of the products 
$\prod_{k=1}^{j}\lambda_{k}$. We will now quantify how these products behave in terms of $\lambda_g$. 

For $\eps>0, n\in \N$, define 
\[A_{\eps}{(n)}:=\left\{(\lambda_1,\ldots,\lambda_n)\in W^n \colon \lambda_g^{(1+\eps)n}\leq \prod_{k=1}^{n}
\lambda_k \leq \lambda_g^{(1-\eps)n }\right\}\] and 
\begin{equation}\label{eq:defB}
  B_{\eps}{(n)}:= \left\{(\lambda_{k})\in W^{\mathbb{N}}\colon\lambda_g^{(1+\eps)m}\leq \prod_{k=1}^{m}
  \lambda_k \leq \lambda_g^{(1-\eps)m }\, \text{ for all } m\geq n \right\}. 
\end{equation}
Notice that by the law of large numbers, for any $\eps>0$ we can make the $\mathbb P(B_\eps(n))$ as close to
$1$ as we like by taking $n$ large enough. 
\begin{lemma}\label{lem:reduce}
  Suppose that there exists $\eps>0$ such that 
  \[
    \int_{1}^{\infty}\int_{B_{\eps}(n)} \prod_{j=1}^{\infty}\left|\cos\left(\pi
    \xi\prod_{k=1}^{j}\lambda_{k} \right)\right|\,d \mathbb P\,d\xi<\infty
  \] 
  for all $n\in \mathbb{N}$. Then for almost every $\omega$ we have $\widehat{\mu}_{\omega}\in
  L^{1}(\R)$. 
\end{lemma}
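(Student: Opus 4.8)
The plan is to combine Tonelli's theorem with the fact, recorded in the remark preceding the lemma, that the increasing family $\{B_\eps(n)\}_{n\in\N}$ exhausts $W^{\N}$ up to a $\mathbb P$-null set. First I would reduce the claim $\widehat\mu_\omega\in L^1(\R)$ to the single tail estimate $\int_{1}^{\infty}|\widehat\mu_\omega(\xi)|\,d\xi<\infty$: since $\mu_\omega$ is a probability measure we have $|\widehat\mu_\omega(\xi)|\le 1$ for all $\xi$, so $\int_{-1}^{1}|\widehat\mu_\omega|\le 2$; and by the product formula of \cref{lem:form} the function $|\widehat\mu_\omega|$ is even, whence $\int_{-\infty}^{-1}|\widehat\mu_\omega|=\int_{1}^{\infty}|\widehat\mu_\omega|$. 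Thus $\widehat\mu_\omega\in L^1(\R)$ if and only if $\int_{1}^{\infty}|\widehat\mu_\omega(\xi)|\,d\xi<\infty$, and it suffices to verify the latter for almost every $\omega$.

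Next I would invoke the hypothesis together with Tonelli. The partial products $\prod_{j=1}^{m}\bigl|\cos(\pi\xi\prod_{k=1}^{j}\lambda_k)\bigr|$ are jointly continuous in $(\omega,\xi)$ and decrease pointwise to $|\widehat\mu_\omega(\xi)|$, so $(\omega,\xi)\mapsto|\widehat\mu_\omega(\xi)|$ is a nonnegative Borel function; likewise each $B_\eps(n)$, being a countable intersection of closed conditions on finitely many coordinates, is Borel. Hence Tonelli applies to the assumed finite double integral, and for every $n$
\[
  \int_{B_\eps(n)}\left(\int_{1}^{\infty}|\widehat\mu_\omega(\xi)|\,d\xi\right)d\mathbb P(\omega)<\infty .
\]
Consequently, for each $n$ there is a $\mathbb P$-null set $N_n\subset B_\eps(n)$ such that $\int_{1}^{\infty}|\widehat\mu_\omega(\xi)|\,d\xi<\infty$ — equivalently $\widehat\mu_\omega\in L^1(\R)$, by the first paragraph — for every $\omega\in B_\eps(n)\setminus N_n$.

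Finally I would glue these conclusions. The sets $B_\eps(n)$ are increasing in $n$, and by the strong law of large numbers $\tfrac1m\sum_{k=1}^{m}\log\lambda_k\to\log\lambda_g$ almost surely; since $\log\lambda_g<0$ lies strictly between $(1+\eps)\log\lambda_g$ and $(1-\eps)\log\lambda_g$, almost every $\omega$ eventually satisfies the defining inequalities of $B_\eps(n)$, i.e.\ $\mathbb P\bigl(\bigcup_n B_\eps(n)\bigr)=1$ — this is precisely the remark before the lemma. Therefore $\bigcup_n\bigl(B_\eps(n)\setminus N_n\bigr)$ has full measure, and every $\omega$ in this set has $\widehat\mu_\omega\in L^1(\R)$, which proves the lemma. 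I do not expect a genuine obstacle here, as the statement is a soft measure-theoretic reduction; the only point needing explicit care is the joint Borel measurability of $(\omega,\xi)\mapsto|\widehat\mu_\omega(\xi)|$ and of $B_\eps(n)$, since that is what licenses the interchange of integrals, and it follows from writing the infinite product as a decreasing limit of continuous partial products.
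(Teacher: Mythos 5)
Your argument is correct and is essentially the same as the paper's: both use evenness of $|\widehat{\mu}_\omega|$ and the bound $|\widehat{\mu}_\omega|\leq 1$ to pass between the tail integral on $[1,\infty)$ and integrability on all of $\R$, then apply Tonelli/Fubini on each $B_{\eps}(n)$ and conclude via $\mathbb{P}(\bigcup_n B_{\eps}(n))=1$ from the law of large numbers. Your additional remarks on joint measurability are a harmless elaboration of what the paper leaves implicit.
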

\begin{proof}
  Notice first of all that if the assumption holds, then using the fact cosine is an even function,
  and since cosine is everywhere bounded from above by $1$, we have 
  \[
    \int_{\mathbb R}\int_{B_{\eps}(n)} \prod_{j=1}^{\infty}\left|\cos\left(\pi
    \xi\prod_{k=1}^{j}\lambda_{k} \right)\right|\,d \mathbb P\,d\xi<\infty. 
  \] 
  Now, by Fubini's theorem it follows that for any $n\in \mathbb{N},$ for almost every $\omega\in
  B_{\eps}(n)$ we have $\widehat{\mu}_{\omega}\in L^{1}(\R)$. By the law of large numbers, we have
  $\mathbb{P}(\cup_{n=1}^{\infty}B_{\eps}(n))=1$. Therefore  $\widehat{\mu}_{\omega}\in L^{1}(\R)$ for
  almost every $\omega$.
\end{proof}

For the time being, let $\eps>0$ be fixed. We will choose it at the end of our proof to guarantee
that the terms $(a_i)$ defined below are summable. We can now focus on the integral from 
\cref{lem:reduce}. We split the domain $[1,\infty)$ of $\xi$ into pieces using the powers
of $\lambda_g$: For $i\in
\mathbb{N}$, let
\[
  I_i:=[\lambda_g^{-i},\lambda_g^{-i-1}).
\] 
We now define, for any integer $i\in \N$ an integer $E_i$ given by the formula :
\[
  E_i=E(i,\eps):= \floor{(1-\eps)i},
\] 
Then, by \cref{lem:form}, for any $\xi\in \R$ we have
\[
  |\widehat{\mu}_\omega(\xi)|\leq \prod_{j=1}^{E_i}\left|\cos\left(\pi \xi\prod_{k=1}^{j}\lambda_{k} \right)\right|.
\]
For $n\in \mathbb N $ and $i$ sufficiently large that $E_i>n$, define 
$$C_{\eps}(n,E_i):=\left\{(\lambda_k)\in W^{E_i}\colon\lambda_{g}^{(1+\epsilon)j}\leq \prod_{k=1}^{j}\lambda_{k}\leq \lambda_{g}^{(1-\epsilon)j}\,\text{ for all }n\leq j \leq E_i\right\}$$
and $$D_{\epsilon}(n,E_i):=\left\{(\lambda_k)\in W^{\N}\colon(\lambda_1,\ldots,\lambda_{E_i})\in C_{\eps}(n,E_i)\right\}.$$

It is not hard to check that $B_\eps(n)\subseteq D_{\eps}(n,E_i)$. 
Let $\mathbb P_{E_i}$ denote the product measure on $W^{E_i}$ coming from the uniform distribution on $W$, or, equivalently, the projection of
$\mathbb{P}$ to the first $E_i$ coordinates. 

\begin{lemma}\label{lem:ai}
  For $\eps>0,$ $n\in \N$, and $i$ sufficiently large that $E_i>n$, denote 
  \[
    a_i=a_i(n, \eps):=\int_{I_i}\int_{C_{\eps}(n,E_i)}  \prod_{j=1}^{E_i}\left|\cos\left(\pi
    \xi\prod_{k=1}^{j}\lambda_{k} \right)\right|d \mathbb P_{E_i} d \xi.
  \] 
  The hypothesis of \cref{lem:reduce} is satisfied if there exists $\epsilon>0$ such that  $\sum_{i:E_i>n} a_i(n, \eps)<\infty$ for all $n\in \N$. 
\end{lemma}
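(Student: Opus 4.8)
The plan is to derive the hypothesis of \cref{lem:reduce} from the assumed summability by decomposing the $\xi$-integral over $[1,\infty)$ into the pieces $I_i$ and invoking the inclusion $B_\eps(n)\subseteq D_\eps(n,E_i)$. Throughout we take $\eps\in(0,1)$, so that $E_i=\floor{(1-\eps)i}\to\infty$. First I would note that, since $\lambda_g<1$, the intervals $I_i=[\lambda_g^{-i},\lambda_g^{-i-1})$ with $i\geq 1$ partition $[\lambda_g^{-1},\infty)$, while $[1,\lambda_g^{-1})$ is a bounded interval on which the integrand is at most $1$. Hence
\[
  \int_{1}^{\infty}\int_{B_{\eps}(n)}\prod_{j=1}^{\infty}\left|\cos\left(\pi\xi\prod_{k=1}^{j}\lambda_{k}\right)\right| d\mathbb P\,d\xi
  \;\leq\; (\lambda_g^{-1}-1)\;+\;\sum_{i\geq 1}\int_{I_i}\int_{B_{\eps}(n)}\prod_{j=1}^{\infty}\left|\cos\left(\pi\xi\prod_{k=1}^{j}\lambda_{k}\right)\right| d\mathbb P\,d\xi .
\]

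Next, on each $I_i$, since every factor $|\cos(\cdot)|$ is at most $1$, the infinite product is bounded above by the truncation $\prod_{j=1}^{E_i}|\cos(\pi\xi\prod_{k=1}^{j}\lambda_{k})|$ (the empty product $1$ when $E_i=0$), which is precisely the upper bound for $|\widehat\mu_\omega(\xi)|$ recorded above via \cref{lem:form}. Crucially the truncated integrand depends only on the first $E_i$ coordinates of $\omega$. Since $E_i\to\infty$, only finitely many indices $i$ satisfy $E_i\leq n$, and for each of these the inner $\mathbb P$-integral is at most $1$, so $\int_{I_i}\int_{B_\eps(n)}(\cdots)\,d\mathbb P\,d\xi\leq|I_i|<\infty$; summing these finitely many terms together with $\lambda_g^{-1}-1$ produces a finite constant $c(n,\eps)$ depending only on $n$ and $\eps$.

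For the indices with $E_i>n$, I would invoke $B_\eps(n)\subseteq D_\eps(n,E_i)$. Because the truncated integrand is a function of only the first $E_i$ coordinates, enlarging the domain from $B_\eps(n)$ to $D_\eps(n,E_i)$ only increases the $\mathbb P$-integral, and then, by the product structure of $\mathbb P$ together with the definition of $D_\eps(n,E_i)$ as the cylinder over $C_\eps(n,E_i)$, that integral equals $\int_{C_\eps(n,E_i)}\prod_{j=1}^{E_i}|\cos(\pi\xi\prod_{k=1}^{j}\lambda_{k})| \,d\mathbb P_{E_i}$. Integrating this over $\xi\in I_i$ yields precisely $a_i(n,\eps)$. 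Combining all the estimates,
\[
  \int_{1}^{\infty}\int_{B_{\eps}(n)}\prod_{j=1}^{\infty}\left|\cos\left(\pi\xi\prod_{k=1}^{j}\lambda_{k}\right)\right| d\mathbb P\,d\xi
  \;\leq\; c(n,\eps)\;+\;\sum_{i\colon E_i>n}a_i(n,\eps),
\]
with $c(n,\eps)<\infty$, so the assumed convergence $\sum_{i\colon E_i>n}a_i(n,\eps)<\infty$ for all $n$ gives the hypothesis of \cref{lem:reduce}.

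I do not anticipate any genuine obstacle here: the argument is essentially bookkeeping rather than analysis. The only points requiring a little care are treating the finitely many small indices $i$ with $E_i\leq n$ separately (these are exactly the ones for which $C_\eps(n,E_i)$ and $a_i$ are not even defined), and verifying that the truncated integrand depends only on the first $E_i$ coordinates so that the passage from $\mathbb P$ to $\mathbb P_{E_i}$ via the product structure is legitimate.
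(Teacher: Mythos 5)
Your proposal is correct and follows essentially the same route as the paper: split $[1,\infty)$ into the intervals $I_i$, treat the finitely many indices with $E_i\leq n$ as a bounded contribution, and for $E_i>n$ use the truncation of the infinite cosine product together with $B_\eps(n)\subseteq D_\eps(n,E_i)$ and the cylinder structure to pass from $\mathbb P$ to $\mathbb P_{E_i}$ over $C_\eps(n,E_i)$, recovering $a_i(n,\eps)$. Your explicit handling of the initial piece $[1,\lambda_g^{-1})$ is a small bookkeeping refinement but does not change the argument.
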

\begin{proof}
  Notice that for all $\eps>0$ and $n\in \N$, 
  \begin{align*}
  	\int_{1}^{\infty}\int_{B_{\eps}(n)}  \prod_{j=1}^{\infty}\left|\cos\left(\pi
  	\xi\prod_{k=1}^{j}\lambda_{k} \right)\right|\,d \mathbb P\,d\xi
  	&= \sum_{i=1}^{\infty}
  	\int_{I_i}\int_{B_{\eps}(n)}  \prod_{j=1}^{\infty}\left|\cos\left(\pi
  	\xi\prod_{k=1}^{j}\lambda_{k} \right)\right|d \mathbb P d\xi\\
  	&=\sum_{i:E_i\leq n}
  	\int_{I_i}\int_{B_{\eps}(n)}  \prod_{j=1}^{\infty}\left|\cos\left(\pi
  	\xi\prod_{k=1}^{j}\lambda_{k} \right)\right|d \mathbb P d\xi\\
  	&+\sum_{i:E_i> n}
  	\int_{I_i}\int_{B_{\eps}(n)}  \prod_{j=1}^{\infty}\left|\cos\left(\pi
  	\xi\prod_{k=1}^{j}\lambda_{k} \right)\right|d \mathbb P d\xi.
  	\end{align*} The first summation in the above is a finite sum. Therefore to verify that the
	hypothesis of \cref{lem:reduce} is satisfied we only need to bound the second summation. This we do below:
  \begin{align*}
    \sum_{i:E_i> n}
    \int_{I_i}\int_{B_{\eps}(n)}  \prod_{j=1}^{\infty}\left|\cos\left(\pi
    \xi\prod_{k=1}^{j}\lambda_{k} \right)\right|d \mathbb P d\xi
    &\leq \sum_{i:E_i> n} \int_{I_i}\int_{D_{\eps}(n,E_i)}
    \prod_{j=1}^{\infty}\left|\cos\left(\pi \xi\prod_{k=1}^{j}\lambda_{k} \right)\right|d \mathbb P
    d\xi\\
    &\leq  \sum_{i:E_i> n} \int_{I_i}\int_{D_{\eps}(n,E_i)}  \prod_{j=1}^{E_i}\left|\cos\left(\pi
    \xi\prod_{k=1}^{j}\lambda_{k} \right)\right|d \mathbb P d\xi\\
    &= \sum_{i:E_i> n} \int_{I_i}\int_{C_{\eps}(n,E_i)}  \prod_{j=1}^{E_i}\left|\cos\left(\pi
    \xi\prod_{k=1}^{j}\lambda_{k} \right)\right|d \mathbb P_{E_i} d\xi.
  \end{align*}
  Thus, by our assumption $\sum_{i:E_i>n}a_{i}(n,\eps)<\infty$, the hypothesis of \cref{lem:reduce} is satisfied.
\end{proof}

By \cref{lem:ai}, to prove \cref{main:cor} we only need to find $\eps>0$ such that for all $n\in\N$ we have $\sum_{i:E_i>n} a_i(n, \eps)<\infty$. In order to find a threshold of $\lambda_g$ from which finding such an $\eps>0$
becomes possible, we need to understand the fine behaviour of the products $\prod_{k=1}^j \lambda_k$.
To that end, we define the following.

Let $\eps>0, i\in \N$ and $E_i=\lfloor (1-\eps)i\rfloor$ be fixed.  For a
given $M, h\in \mathbb{N}$, we write
\[
  V(\ell, M)=\Big[\arccos{\lambda_g^{\frac{h}{M}}},\arccos{\lambda_g^{\frac{h+1}{M}}}\Big)\cup
  \Big(\pi-\arccos{\lambda_g^{\frac{h+1}{M}}},\pi-\arccos{\lambda_g^{\frac{h}{M}}}\Big]. 
\]
We emphasise that we interpret $\arccos$ as a function from $[-1,1]\to[0,\pi]$. Furthermore, for $M,
k, h\in \N$ and $\xi \in \R$, we set
\[
  G_{h,k}=G_{h,k}(\xi, M):= \Big\{(\lambda_1,\ldots ,\lambda_k)\in W^{k}\colon\xi\prod_{j=1}^k
\lambda_j\pi\in V(h, M)+\mathbb{Z}\pi \Big\}. \]
Roughly speaking, the set $G_{h,k}$ consists of those elements of $W^{k}$ for which $|\cos(\pi
\xi \prod_{j=1}^k \lambda_j)|$ is approximately of value $\lambda_g^{h/M}$. Further, we set, for $(h_1, \ldots,
h_{E_i})\in \N^{E_i}$, 
$$ L_{h_1,\ldots,h_{E_i}}(\xi , M):=\left\{(\lambda_{1},\ldots, \lambda_{E_i})\in W^{E_i}\colon\xi\prod_{j=1}^k
\lambda_j\pi\in V(h_k, M)+\mathbb{Z}\pi\, \textrm{ for all }1\leq k\leq E_i\right\}.$$

For brevity's sake, we will denote $L_{h_1,\ldots,h_{E_i}}(\xi , M)$ by $L_{h_1,\ldots,h_{E_i}}$, however, we urge the reader to keep in mind that this set depends on $\xi$ in particular. 
 Recalling that the aim is to guarantee that $(a_i)$ gives a summable series, we look for an upper
bound for $a_i$. 

\begin{lemma}\label{lem:aibound}
  Let $\eps>0$, $M, n\in \N$ and let $i$ be sufficiently large that $n< E_i$. We have the following bound for $a_i$: 
  \begin{equation*}
    a_i(n, \eps)\leq \int_{I_i}\sum_{(h_1,\ldots,h_{E_i})\in\mathbb{N}^{E_i}}
    \lambda_g^{\sum_{j=1}^{E_i}\frac{h_j}{M}}\cdot \mathbb P_{E_i}(C_{\eps}(n,E_i)\cap L_{h_1,\ldots,h_{E_i}})
    d\xi. 
  \end{equation*}
  
\end{lemma}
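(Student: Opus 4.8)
The plan is to fix $\xi$ in the outer integration domain $I_i$, decompose the probability space $W^{E_i}$ into the atoms $L_{h_1,\ldots,h_{E_i}}(\xi,M)$, bound the integrand uniformly on each atom, and reassemble using Tonelli's theorem. First I would record the elementary partition fact behind the definition of $V(h,M)$. Since $\lambda_g\in(0,1)$, the numbers $\arccos\lambda_g^{h/M}$ increase from $0$ to $\tfrac\pi2$ as $h$ runs through the nonnegative integers, so $\bigsqcup_{h\ge0}\bigl[\arccos\lambda_g^{h/M},\arccos\lambda_g^{(h+1)/M}\bigr)=[0,\tfrac\pi2)$, and the reflected intervals tile $(\tfrac\pi2,\pi]$; translating by $\mathbb{Z}\pi$ and using $\pi$-periodicity of $|\cos|$ covers all of $\R$ except the discrete set $\tfrac\pi2+\mathbb{Z}\pi$. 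Hence for each $\xi\in I_i$ and $\mathbb P_{E_i}$-almost every $(\lambda_1,\ldots,\lambda_{E_i})\in W^{E_i}$ there is, for each $1\le k\le E_i$, a unique nonnegative integer $h_k$ with $\pi\xi\prod_{j=1}^k\lambda_j\in V(h_k,M)+\mathbb{Z}\pi$: the exceptional configurations, where some partial product $\pi\xi\prod_{j\le k}\lambda_j$ lands in $\tfrac\pi2+\mathbb{Z}\pi$, form a countable union of level sets of nonconstant real-analytic functions on $W^{E_i}$ (note $\xi\in I_i$ is bounded away from $0$), hence a $\mathbb P_{E_i}$-null set. Thus $W^{E_i}=\bigsqcup_{(h_1,\ldots,h_{E_i})\in\mathbb N^{E_i}}L_{h_1,\ldots,h_{E_i}}(\xi,M)$ up to a null set, and the same holds after intersecting both sides with $C_\eps(n,E_i)$.

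Next I would bound the integrand on a single atom. If $\theta\in V(h,M)+\mathbb{Z}\pi$ and $\theta_0\in[0,\pi)$ denotes its representative modulo $\pi$, then using monotonicity of $\cos$ on $[0,\pi]$ together with $\cos(\pi-t)=-\cos t$ — treating the cases $\theta_0<\tfrac\pi2$ and $\theta_0>\tfrac\pi2$ separately — one gets $|\cos\theta|=|\cos\theta_0|\le\lambda_g^{h/M}$. Applying this with $\theta=\pi\xi\prod_{j=1}^k\lambda_j$ and $h=h_k$ for $k=1,\ldots,E_i$ shows that on $L_{h_1,\ldots,h_{E_i}}(\xi,M)$ we have
\[
  \prod_{j=1}^{E_i}\left|\cos\left(\pi\xi\prod_{k=1}^j\lambda_k\right)\right|\;\le\;\prod_{j=1}^{E_i}\lambda_g^{h_j/M}\;=\;\lambda_g^{\sum_{j=1}^{E_i}h_j/M},
\]
and therefore
\[
  \int_{C_\eps(n,E_i)\cap L_{h_1,\ldots,h_{E_i}}}\prod_{j=1}^{E_i}\left|\cos\left(\pi\xi\prod_{k=1}^j\lambda_k\right)\right|d\mathbb P_{E_i}\;\le\;\lambda_g^{\sum_{j=1}^{E_i}h_j/M}\,\mathbb P_{E_i}\bigl(C_\eps(n,E_i)\cap L_{h_1,\ldots,h_{E_i}}\bigr).
\]

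Finally, since the integrand is nonnegative, Tonelli's theorem lets me rewrite $\int_{C_\eps(n,E_i)}(\cdots)\,d\mathbb P_{E_i}$ as the countable sum over $(h_1,\ldots,h_{E_i})\in\mathbb N^{E_i}$ of the integrals over the atoms $C_\eps(n,E_i)\cap L_{h_1,\ldots,h_{E_i}}$, and then exchange this sum with the outer integral over $I_i$; substituting the atomwise bound from the previous step yields exactly the claimed inequality for $a_i(n,\eps)$. I do not expect a real obstacle in this lemma, as it is essentially a repackaging of the definition of $a_i$ into a sum over ``cosine size classes''; the only points requiring any care are the verification that the boundary configurations are $\mathbb P_{E_i}$-null and the one-line estimate $|\cos\theta|\le\lambda_g^{h/M}$ on $V(h,M)+\mathbb{Z}\pi$. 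The genuinely substantive work — estimating $\mathbb P_{E_i}(C_\eps(n,E_i)\cap L_{h_1,\ldots,h_{E_i}})$ and then choosing $M$ and $\eps$ so that $\sum_{i:E_i>n}a_i(n,\eps)<\infty$ — is deferred to the subsequent lemmas.
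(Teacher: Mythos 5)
Your proposal is correct and follows essentially the same route as the paper: partition $W^{E_i}$ (up to a null set) into the classes $L_{h_1,\ldots,h_{E_i}}$, use the uniform bound $\prod_j|\cos(\pi\xi\prod_{k\le j}\lambda_k)|\le\lambda_g^{\sum_j h_j/M}$ on each class intersected with $C_\eps(n,E_i)$, and exchange the sum with the integrals by nonnegativity. The extra details you supply (the null-set verification and the monotonicity argument for $|\cos\theta|\le\lambda_g^{h/M}$) are exactly the points the paper dismisses as routine, so there is no substantive difference.
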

\begin{proof}
  
  Notice that for any $(h_1, \ldots, h_{E_i})\in \mathbb{N}^{E_i}$, if $(\lambda_1,\ldots,\lambda_{E_i})\in
  L_{h_1,\ldots,h_{E_i}}\cap C_{\eps}(n, E_i) $ then for any $1\leq j \leq E_i$ we have
  \[
    \left|\cos\left(\pi \xi \prod_{k=1}^j \lambda_k\right)\right|\leq \lambda_g^{\frac{h_j}{M}}.
  \] 
  Hence, for any $(h_1, \ldots, h_{E_i})\in \mathbb{N}^{E_i}$ and $(\lambda_1,\ldots,\lambda_{E_i})\in
  L_{h_1,\ldots,h_{E_i}}\cap C_{\eps}(n, E_i) $, we have a uniform upper bound  
  \[
    \prod_{j=1}^{E_i}\Big|\cos\left(\pi \xi\prod_{k=1}^{j}\lambda_{k} \right)\Big|\leq
    \lambda_g^{\sum_{j=1}^{E_i}\frac{h_j}{M}}.
  \]
  It is easy to check that for $(h_1,\ldots,h_{E_i})\neq (h_1',\ldots,h_{E_i}')$, the sets
  $L_{h_1,\ldots,h_{E_i}}$ and $L_{h_1',\ldots,h_{E_i}'}$ are disjoint, and that they exhaust $W^\N$ up to a set of measure zero. Thus,
  for $E_i>n$, we have the following bound for $a_i(n,\epsilon)$ 
  \begin{align*}
    a_i(n,\epsilon)
    &= \int_{I_i}\sum_{(h_1,\ldots,h_{E_i})\in\mathbb{N}^{E_i}}\int_{C_{\eps}(n,E_i)\cap
      L_{h_1,\ldots,h_{E_i}}}  \prod_{j=1}^{E_i}\Bigg|\cos\left(\pi \xi \prod_{k=1}^{j}\lambda_{k}
    \right)\Bigg|d \mathbb P_{E_i} d\xi \\
    &\leq \int_{I_i}\sum_{(h_1,\ldots,h_{E_i})\in\mathbb{N}^{E_i}}
    \lambda_g^{\sum_{j=1}^{E_i}\frac{h_j}{M}}\cdot \mathbb P_{E_i}(C_{\eps}(n,E_i)\cap
    L_{h_1,\ldots,h_{E_i}}) d\xi.
    \qedhere
  \end{align*}
\end{proof}

  By \cref{lem:aibound} we would be in a good position to look for conditions on $\eps$ to make
  $(a_i(n, \eps))$ summable, if we could find bounds for the $\mathbb P_{E_i}(C_{\eps}(n,
  E_i)\cap L(h_1, \ldots, h_{E_i})) $ terms appearing in the integral. We do this via a conditioning argument, for which we need to define
  a good filtration of $\sigma$-algebras. 

We define three important events in the $\sigma$-algebra of $\mathbb P$.  The first
  arises from interpreting the sets of ``good'' contraction rates $G_{h_k,k}$ as events. Recall
  that we are considering $\xi\in \R$ and $M\in \N$ fixed, and that for $h, k\in \N$, $G_{h,
  k}=G_{h, k}(\xi, M)$. Now, write, for all $h, k\in \N$
  \[
    \mathcal{G}_{h,k}=\mathcal G_{h,k}(\xi, M) = 
    \left\{(\lambda_1,\lambda_2,\ldots) \in W^\N \colon (\lambda_1,\ldots,\lambda_{k})\in G_{h, k} \right \}. 
  \]
  Further, recall that the contractions in $A_\eps(k)$ are such that the $k$-fold product of
  contractions is $\eps$-close to the mean behaviour. We write $\mathcal{A}_\eps(k)$ for the analogous subset of $W^{\N}$:
  \begin{equation}\label{eq:defA}
    \mathcal{A}_ {\eps}(k) = \{(\lambda_1,\lambda_2,\ldots)\in W^\N \colon (\lambda_1,\ldots,\lambda_{k})\in
    A_{\eps}(k)\}.
  \end{equation}
  For $n\leq m$ we also define $\mathcal{F}_n(h_1,\ldots,h_m)$, to be the event of $\mathcal{G}_{h_k,k}$
  and $\mathcal{A}_\eps(k)$ both occurring from index $n$ to $m$ and $\mathcal{G}_{h_k,k}$
  occurring for all indices up to $n$, i.e.
  \begin{equation}\label{eq:calF}
    \mathcal{F}_n(h_1,\ldots,h_m) = \bigcap_{k=1}^{(n-1)} \mathcal{G}_{h_k,k}
    \;\cap\; \bigcap_{k=n}^m (\mathcal{G}_{h_k,k}\cap \mathcal{A}_{\eps}(k)).  
  \end{equation}
  Note specifically that $\mathcal{G}_{h_k,k}, \mathcal{A}_{\eps}(k),\mathcal{F}_n(h_1,\ldots,h_k) \in
  \mathfrak{F}_{k}$, where $\mathfrak{F}_k$ is the $\sigma$-algebra induced by the sequence of
  random contractions $\lambda_1,\lambda_2,\ldots,\lambda_k$. That is, $\mathfrak{F}_k =
  \sigma(\lambda_1,\ldots,\lambda_k)$ which is equal to the product of the projection of the full
  $\sigma$-algebra onto the first $k$ components with the trivial $\sigma$-algebra in the remaining
  components. 

  \begin{lemma}\label{lem:msr}
    Let $\eps>0$ and $n\in \N.$ For all $i$ sufficiently large that $E_i>n$ and $(h_1, \ldots, h_{E_i})\in \N^{E_i}$ 
    we have
    $$
      \mathbb P_{E_i}(L_{h_1,\ldots,h_{E_i}}\cap C_{\eps}(n,E_i))\leq
      \prod_{k=n}^{E_i}\mathbb P(\mathcal{G}_{h_k,k} \mid \mathcal{F}_{n}(h_1,\ldots,h_{k-1})).
    $$
  \end{lemma}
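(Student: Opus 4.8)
The plan is to recognise the left-hand side as the $\mathbb P$-probability of the event $\mathcal{F}_n(h_1,\ldots,h_{E_i})$ defined in \eqref{eq:calF}, and then to peel off the coordinates $\lambda_{E_i},\ldots,\lambda_n$ one at a time via the chain rule for conditional probabilities, discarding the sets $\mathcal{A}_{\eps}(k)$ as we go.

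First I would unwind the definitions. A point $(\lambda_k)\in W^{\N}$ satisfies $(\lambda_1,\ldots,\lambda_{E_i})\in L_{h_1,\ldots,h_{E_i}}\cap C_{\eps}(n,E_i)$ precisely when $(\lambda_1,\ldots,\lambda_k)\in G_{h_k,k}$ for every $1\le k\le E_i$ and $(\lambda_1,\ldots,\lambda_j)\in A_{\eps}(j)$ for every $n\le j\le E_i$; that is, precisely when $(\lambda_k)\in\bigcap_{k=1}^{E_i}\mathcal{G}_{h_k,k}\cap\bigcap_{k=n}^{E_i}\mathcal{A}_{\eps}(k)$, which by \eqref{eq:calF} is exactly $\mathcal{F}_n(h_1,\ldots,h_{E_i})$. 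Since $\mathcal{F}_n(h_1,\ldots,h_{E_i})\in\mathfrak{F}_{E_i}$ and $\mathbb P_{E_i}$ is the pushforward of $\mathbb P$ under the projection to the first $E_i$ coordinates, this gives
\[
  \mathbb P_{E_i}\bigl(L_{h_1,\ldots,h_{E_i}}\cap C_{\eps}(n,E_i)\bigr)=\mathbb P\bigl(\mathcal{F}_n(h_1,\ldots,h_{E_i})\bigr).
\]

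Next I would run the telescoping estimate. For every $m$ with $n\le m\le E_i$, equation \eqref{eq:calF} gives
\[
  \mathcal{F}_n(h_1,\ldots,h_m)=\mathcal{F}_n(h_1,\ldots,h_{m-1})\cap\mathcal{G}_{h_m,m}\cap\mathcal{A}_{\eps}(m)\subseteq\mathcal{F}_n(h_1,\ldots,h_{m-1})\cap\mathcal{G}_{h_m,m},
\]
and hence, adopting the convention that a conditional probability is taken to be $0$ whenever its conditioning event is $\mathbb P$-null (in which case the left-hand side below also vanishes, being a subset),
\[
  \mathbb P\bigl(\mathcal{F}_n(h_1,\ldots,h_m)\bigr)\le\mathbb P\bigl(\mathcal{G}_{h_m,m}\mid\mathcal{F}_n(h_1,\ldots,h_{m-1})\bigr)\cdot\mathbb P\bigl(\mathcal{F}_n(h_1,\ldots,h_{m-1})\bigr).
\]
Iterating this inequality downwards from $m=E_i$ to $m=n$ makes the factors $\mathbb P(\mathcal{F}_n(h_1,\ldots,h_m))$ telescope, leaving
\[
  \mathbb P\bigl(\mathcal{F}_n(h_1,\ldots,h_{E_i})\bigr)\le\Bigl(\prod_{k=n}^{E_i}\mathbb P\bigl(\mathcal{G}_{h_k,k}\mid\mathcal{F}_n(h_1,\ldots,h_{k-1})\bigr)\Bigr)\cdot\mathbb P\bigl(\mathcal{F}_n(h_1,\ldots,h_{n-1})\bigr),
\]
and bounding the last factor by $\mathbb P(\mathcal{F}_n(h_1,\ldots,h_{n-1}))=\mathbb P\bigl(\bigcap_{k=1}^{n-1}\mathcal{G}_{h_k,k}\bigr)\le 1$ gives the claimed inequality.

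I do not expect any real obstacle: this lemma is essentially bookkeeping. The two points that need a little care are (i) matching the cylinder-set descriptions of $L_{h_1,\ldots,h_{E_i}}$ and $C_{\eps}(n,E_i)$ against the events in \eqref{eq:calF}, in particular keeping track of which indices carry an $\mathcal{A}_{\eps}$ factor and which do not, and (ii) the degenerate case of conditioning on a $\mathbb P$-null event, which is dispatched by the convention above. The substantive estimates, namely bounding each factor $\mathbb P(\mathcal{G}_{h_k,k}\mid\mathcal{F}_n(h_1,\ldots,h_{k-1}))$ by exploiting the independence of $\lambda_k$ from $\mathfrak{F}_{k-1}$, are the business of the lemmas that follow.
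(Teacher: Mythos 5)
Your proposal is correct and follows essentially the same route as the paper: identify the cylinder set $L_{h_1,\ldots,h_{E_i}}\cap C_{\eps}(n,E_i)$ with the event $\mathcal{F}_n(h_1,\ldots,h_{E_i})$, apply the chain rule along the filtration $(\mathfrak{F}_k)$, discard the $\mathcal{A}_{\eps}(k)$ factors, and bound the remaining base probability by $1$. If anything, your version is slightly cleaner: by iterating down to $m=n$ you obtain exactly the product starting at $k=n$ as in the statement (the paper stops at $k=n+1$ and absorbs the last step into $\mathbb P(\mathcal{F}_n(h_1,\ldots,h_n))\le 1$), and you make the null-conditioning convention explicit, which the paper leaves tacit.
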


  \begin{proof}
    We can rewrite $\mathbb P_{E_i}(L_{h_1,\ldots,h_{E_i}}\cap C_{\eps}(n,E_i))$ in terms of the events
    above to get
    \begin{align*}
      \mathbb P_{E_i}(L_{h_1,\ldots,h_{E_i}}\cap C_{\eps}(n,E_i)) 
&= \mathbb P\left( \bigcap_{k=1}^{n-1} \mathcal{G}_{h_k,k}
\cap \bigcap_{k=n}^{E_i} (\mathcal{G}_{h_k,k}\cap \mathcal{A}_{\eps}(k)) \right)
= \mathbb P\left(  \mathcal{F}_n(h_1,\ldots,h_{E_i}) \right)
    \end{align*}
    We will use that $(\mathfrak{F}_n)_{n\geq 1}$ is a filtration and use the tower property of conditional
    expectations (and hence conditional probabilities), to write \begin{align*}
      \mathbb P(\mathcal{F}_n(h_1,\ldots,h_m)) 
  &= \mathbb P((\mathcal{G}_{h_m,m}\cap
  \mathcal{A}_\eps(m))\cap\mathcal{F}_{n}(h_1,\ldots,h_{m-1}))
  \\
  &= \mathbb P(\mathcal{G}_{h_m,m}\cap
  \mathcal{A}_\eps(m) \mid \mathcal{F}_{n}(h_1,\ldots,h_{m-1}))
  \cdot \mathbb P(\mathcal{F}_{n}(h_1,\ldots,h_{m-1}))
    \end{align*}
    for $m\geq n$.
    Repeatedly applying the identity above yields the following:
    \begin{align*}
      \mathbb P_{E_i}(L_{h_1,\ldots,h_{E_i}}\cap C_{\eps}(n,E_i)) 
  &= \mathbb P\left(  \mathcal{F}_n(h_1,\ldots,h_{E_i}) \right)
  \\
  &= \left(\prod_{k=n+1}^{E_i}\mathbb P(\mathcal{G}_{h_k,k}\cap
    \mathcal{A}_\eps(k) \mid \mathcal{F}_{n}(h_1,\ldots,h_{k-1}))
  \right)\cdot \mathbb P(\mathcal{F}_{n}(h_1,\ldots,h_{n}))
  \\
   &\leq
   \prod_{k=n+1}^{E_i}\mathbb P(\mathcal{G}_{h_k,k}\cap
   \mathcal{A}_\eps(k) \mid \mathcal{F}_{n}(h_1,\ldots,h_{k-1}))
   \\
   &\leq
   \prod_{k=n+1}^{E_i}\mathbb P(\mathcal{G}_{h_k,k} \mid \mathcal{F}_{n}(h_1,\ldots,h_{k-1})).
\end{align*}
This completes our proof.
\end{proof}

The following lemma provides an upper bound for the probabilities appearing in the product in \cref{lem:msr}. 
\begin{lemma}\label{lem:value}
  Let $0<\eps<1$, $n\in \N$, $i$ be sufficiently large that $E_i>n$ and $n< k\leq E_i$.
  Denote $\Delta=\lambda_{\max}-\lambda_{\min}$. Then for \(\xi\in I_i\) and $(h_1, \ldots, h_{k})\in \N^{k}$ we have
  \[
    \mathbb{P}( \mathcal{G}_{h_k,k}(\xi, M)\mid \mathcal{F}_{n}(h_1,\ldots,h_{k-1}))\leq
    \left(1+\frac{3\lambda_g^{\eps^2 i}}{\Delta} \right) \cdot
    \frac{\arccos{\lambda_g^{\frac{h_k+1}{M}}}-\arccos{\lambda_g^{\frac{h_k}{M}}} }{\pi/2}.
  \]
\end{lemma}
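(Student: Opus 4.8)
The plan is to reduce the conditional probability to a one-dimensional Lebesgue estimate by freezing $\lambda_1,\ldots,\lambda_{k-1}$. Write $P=\prod_{j=1}^{k-1}\lambda_j$, which is $\mathfrak{F}_{k-1}$-measurable, and note that $\mathcal{G}_{h_k,k}$ is precisely the event $\{\xi P\lambda_k\pi\in V(h_k,M)+\mathbb{Z}\pi\}$. Since the $\lambda_j$ are i.i.d., $\lambda_k$ is independent of $\mathfrak{F}_{k-1}$, so $\mathbb{P}(\mathcal{G}_{h_k,k}\mid\mathfrak{F}_{k-1})=g(P)$, where for $p>0$ we set
\[
  g(p):=\frac{1}{\Delta}\,\mathcal{L}\bigl(\{\lambda\in W\colon \xi p\lambda\pi\in V(h_k,M)+\mathbb{Z}\pi\}\bigr).
\]
As $\mathcal{F}_{n}(h_1,\ldots,h_{k-1})\in\mathfrak{F}_{k-1}$, the tower property gives
\[
  \mathbb{P}\bigl(\mathcal{G}_{h_k,k}\mid \mathcal{F}_{n}(h_1,\ldots,h_{k-1})\bigr)=\mathbb{E}\bigl[g(P)\mid \mathcal{F}_{n}(h_1,\ldots,h_{k-1})\bigr],
\]
so it suffices to bound $g(P)$ pointwise on the event $\mathcal{F}_{n}(h_1,\ldots,h_{k-1})$ by the right-hand side of the lemma (if this event is null there is nothing to prove).

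Second, I would estimate $g(p)$ geometrically. Put $\alpha=\arccos\lambda_g^{h_k/M}$ and $\beta=\arccos\lambda_g^{(h_k+1)/M}$; since $\lambda_g\in(0,1)$ we have $0\le\alpha<\beta<\pi/2$, so $V(h_k,M)=[\alpha,\beta)\cup(\pi-\beta,\pi-\alpha]$ has Lebesgue measure $2(\beta-\alpha)$. The target set $\{\lambda\colon \xi p\lambda\pi\in V(h_k,M)+\mathbb{Z}\pi\}$ equals $\tfrac{1}{\xi p\pi}\bigl(V(h_k,M)+\mathbb{Z}\pi\bigr)$, a periodic subset of $\mathbb{R}$ of period $\tfrac{1}{\xi p}$ carrying Lebesgue mass $\tfrac{2(\beta-\alpha)}{\xi p\pi}$ per period. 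The interval $W$, of length $\Delta$, meets at most $\Delta\xi p+2$ such periods, so
\[
  g(p)\le\frac{1}{\Delta}\bigl(\Delta\xi p+2\bigr)\cdot\frac{2(\beta-\alpha)}{\xi p\pi}
  =\frac{2(\beta-\alpha)}{\pi}\Bigl(1+\frac{2}{\Delta\xi p}\Bigr)
  =\frac{\arccos\lambda_g^{(h_k+1)/M}-\arccos\lambda_g^{h_k/M}}{\pi/2}\Bigl(1+\frac{2}{\Delta\xi p}\Bigr).
\]

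Third, I would absorb the error term $\tfrac{2}{\Delta\xi p}$ using the conditioning. On $\mathcal{F}_{n}(h_1,\ldots,h_{k-1})$ the event $\mathcal{A}_{\eps}(k-1)$ occurs (this is where $k-1\ge n$ is used), so $P\ge\lambda_g^{(1+\eps)(k-1)}$; combined with $\xi\ge\lambda_g^{-i}$ for $\xi\in I_i$ this yields $\xi P\ge\lambda_g^{(1+\eps)(k-1)-i}$. Since $k\le E_i$ gives $k-1<(1-\eps)i$, the exponent obeys $(1+\eps)(k-1)-i<(1-\eps^2)i-i=-\eps^2 i$, and because $\lambda_g<1$ this forces $\xi P>\lambda_g^{-\eps^2 i}$, i.e.\ $\tfrac{1}{\xi P}<\lambda_g^{\eps^2 i}$. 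Hence on that event $\tfrac{2}{\Delta\xi P}<\tfrac{2\lambda_g^{\eps^2 i}}{\Delta}\le\tfrac{3\lambda_g^{\eps^2 i}}{\Delta}$, and substituting this into the bound for $g(P)$ and then into the conditional expectation from the first step completes the proof.

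I do not expect any single step to be genuinely hard. The place requiring the most care is the first: the conditioning event $\mathcal{F}_{n}(h_1,\ldots,h_{k-1})$ does not pin $P$ down to a single value, so one cannot simply substitute a fixed $p$; the argument must pass through the $\mathfrak{F}_{k-1}$-conditional expectation and invoke a bound on $g(P)$ that is valid pointwise on the event rather than for an arbitrary $p$. The only other fussy point is the period-count in the second step, which has to be done with enough slack to keep the constant at or below $3$.
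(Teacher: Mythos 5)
Your proposal is correct and follows essentially the same route as the paper: freeze $\lambda_1,\ldots,\lambda_{k-1}$, bound the normalised Lebesgue measure of the admissible $\lambda_k$'s by counting periods of length $(\xi P)^{-1}$ inside $W$, and absorb the boundary term using $P\geq\lambda_g^{(1+\eps)(k-1)}$ (from $\mathcal{A}_\eps(k-1)$, valid since $k-1\geq n$), $\xi\geq\lambda_g^{-i}$, and $k\leq E_i\leq(1-\eps)i$ to get $(\Delta\xi P)^{-1}\leq\lambda_g^{\eps^2 i}/\Delta$. Your formulation of the first step via $\mathbb{P}(\mathcal{G}_{h_k,k}\mid\mathfrak{F}_{k-1})=g(P)$ and the tower property is just a cleaner phrasing of the paper's "uniform bound on slices of the conditioning event" argument, and your constants come out slightly sharper but within the stated bound.
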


\begin{proof}
  Let $\eps, n,i$ and $k$ be as in the statement of our lemma. Let $\xi\in I_i$ and $(h_1,\ldots, h_k)\in \N^{k}$. For $(\lambda'_1,\ldots,\lambda'_{k-1},\ldots)\in\mathcal{F}_{n}(h_1,\ldots,h_{k-1})$
  define 
  \begin{align*}
    S(\lambda_1', \ldots, \lambda_{k-1}'):=&\{\lambda_k\in W\colon(\lambda_1,\ldots,\lambda_k)\in
    G_{h_k,k} \text{ and } (\lambda_1,\ldots,\lambda_{k-1})=(\lambda'_1,\ldots,\lambda'_{k-1}) \}. 
  \end{align*}
  The set $S(\lambda_1',\ldots,\lambda_{k-1}')$ can be expressed explicitly as follows 
  \begin{align*}
    S&(\lambda_1', \ldots, \lambda_{k-1}')
    =\Big\{\lambda_k\in W \colon \pi\xi\lambda_k \prod_{j=1}^{k-1}\lambda_j'\in
    V(k,M)+\mathbb{Z}\pi\Big\}\\
     &=W\cap \bigcup_{l\in\mathbb{Z}}
     \Bigg(\bigg[\frac{\arccos{\lambda_g^{\frac{h_k}{M}}}}{\pi\xi\prod_{j=1}^{k-1}\lambda_j'},
       \frac{\arccos{\lambda_g^{\frac{h_k+1}{M}}}}{\pi\xi\prod_{j=1}^{k-1}\lambda_j'}\bigg)+
     \frac{l}{\xi\prod_{j=1}^{k-1}\lambda_j'}\\
     &\hspace{5cm}
       \cup
       \bigg(\frac{\pi-\arccos{\lambda_g^{\frac{h_k+1}{M}}}}{\pi\xi\prod_{j=1}^{k-1}\lambda_j'},
       \frac{\pi-\arccos{\lambda_g^{\frac{h_k}{M}}}}{\pi\xi\prod_{j=1}^{k-1}\lambda_j'}\bigg] +
     \frac{l}{\xi\prod_{j=1}^{k-1}\lambda_j'}\Bigg)\\
     &=:W\cap \bigcup_{l\in\mathbb Z} \big(V_+(l,k)\cup V_-(l,k) \big), 
  \end{align*}
  where, for $l\in \mathbb Z$,
  \[
    V_+(l,k)=\bigg[\frac{\arccos{\lambda_g^{\frac{h_k}{M}}}}{\pi\xi\prod_{j=1}^{k-1}\lambda_j'},
    \frac{\arccos{\lambda_g^{\frac{h_k+1}{M}}}}{\pi\xi\prod_{j=1}^{k-1}\lambda_j'}\bigg)
    +\frac{l}{{\xi\prod_{j=1}^{k-1}\lambda_j'}}
  \]
  and
  \[
    V_-(l,k)=\bigg(\frac{\pi-\arccos{\lambda_g^{\frac{h_k+1}{M}}}}{\pi\xi\prod_{j=1}^{k-1}\lambda_j'},
    \frac{\pi-\arccos{\lambda_g^{\frac{h_k}{M}}}}{\pi\xi\prod_{j=1}^{k-1}\lambda_j'}\bigg]
    +\frac{l}{{\xi\prod_{j=1}^{k-1}\lambda_j'}}. 
  \]
  From the above expression it is clear that \(S(\lambda_1', \ldots, \lambda_{k-1}')\) is a union of
  some large number $P$ of intervals of the form $V_-(l,k)$ and $V_+(l,k)$  which have the same
  length
  \begin{equation}\label{eq:J00}
    J= \frac{
    \arccos{\lambda_g^{\frac{h_k+1}{M}}}-\arccos{\lambda_g^{\frac{h_k}{M}}}}{\pi\xi\prod_{j=1}^{k-1}\lambda_j'},
  \end{equation}
  and at most $2$
  intervals of length less than $J$ (if the endpoints of $W$ ``cut off" a smaller piece of some
  $V_-(l,k)$ or $V_+(l,k)$). As
  $(\lambda'_1,\ldots,\lambda'_{k-1},\ldots)\in\mathcal{F}_{n}(h_1,\ldots,h_{k-1})$, we have
  $\prod_{\ell=1}^{k-1} \lambda_\ell' \geq \lambda_g ^{(1+\eps)(k-1)}.$ This implies the following
  estimate for $J$: 
  \begin{equation}\label{eq:J0}
    J= \frac{ \arccos{\lambda_g^{\frac{h_k+1}{M}}}-\arccos{\lambda_g^{\frac{h_k}{M}}}}{\pi
    \xi \prod_{j=1}^{k-1}\lambda_j'}\leq \frac{
  \arccos{\lambda_g^{\frac{h_k+1}{M}}}-\arccos{\lambda_g^{\frac{h_k}{M}}}}{\pi\xi
\lambda_g^{(1+\eps)(k-1)}}. 
  \end{equation}
  Since \(\xi\in I_i=[\lambda_g^{-i},\lambda_g^{-i-1})\), we have \(\xi\geq \lambda_g^{-i}\). Thus,
  using the fact that \(k-1<E_i\) and \cref{eq:J0}, we also have the following bound on $J$:
  \begin{equation}\label{eq:J1}
    J\leq \frac{ \arccos{\lambda_g^{\frac{h_k+1}{M}}}-\arccos{\lambda_g^{\frac{h_k}{M}}}}{\pi\xi
    \lambda_g^{(1+\eps)(k-1)}}\leq \frac{
  \arccos{\lambda_g^{\frac{h_k+1}{M}}}-\arccos{\lambda_g^{\frac{h_k}{M}}}}{\pi
    \lambda_g^{-i} \lambda_g^{(1+\eps)E_i}}.
  \end{equation}
  Recall now that \(E_i=\floor{(1-\eps)i}\leq (1-\eps)i\). Combining this bound with \cref{eq:J1} we obtain
  \begin{equation}\label{eq:J}
    J\leq \frac{ \arccos{\lambda_g^{\frac{h_k+1}{M}}}-\arccos{\lambda_g^{\frac{h_k}{M}}}}{\pi
    \lambda_g^{-i} \lambda_g^{(1+\eps)E_i}} \leq \frac{
  \arccos{\lambda_g^{\frac{h_k+1}{M}}}-\arccos{\lambda_g^{\frac{h_k}{M}}}}{\pi
\lambda_g^{-{\eps}^2i}}.
\end{equation}
We now begin estimating the number $P$ of intervals in $S(\lambda_1', \ldots, \lambda'_{k-1})$. Denote 
\[
  l_0= \min \{l\in\mathbb{Z} \colon V_+(l,k)\subseteq W \},
  \quad\text{and}\quad l_1= \max \{l\in\mathbb{Z} \colon V_-(l,k)\subseteq W \}. 
\]
We have
\begin{equation}\label{eq:P0}
   P\leq 2(l_1-l_0+1)+4.
\end{equation}
We need to find the maximal range of $l\in \mathbb Z$ for which $V_+(l,k)$ and $V_-(l,k)$ can fit inside $W$. Recalling the definition of these sets, if we divide $\Delta$, the length
of \(W\), by \((\xi\prod_{j=1}^{k-1}\lambda_j')^{-1}\), the distance between consecutive intervals in this collection, this yields the following upper bound for \(l_1-l_0+1\):
\begin{equation}\label{eq:P1}
   l_1-l_0+1\leq
  \left\lfloor\frac{\Delta}{(\xi\prod_{j=1}^{k-1}\lambda_j')^{-1}}\right\rfloor.
\end{equation}
Thus, by \cref{eq:P0} and \cref{eq:P1} we have 
\begin{equation}\label{eq:P}
  P\leq
  2\floor{\Delta\xi\prod_{j=1}^{k-1}\lambda_j'}+4.
\end{equation}
Substituting the value for $J$ given by \cref{eq:J00} into equation \cref{eq:P}, we obtain
\begin{equation}\label{eq:Pf}
  P \leq 2\floor{\frac{\Delta\cdot
  (\arccos{\lambda_g^{\frac{h_k+1}{M}}}-\arccos{\lambda_g^{\frac{h_k}{M}}} )}{J\pi}}+4\leq
  \frac{2\Delta\cdot (\arccos{\lambda_g^{\frac{h_k+1}{M}}}-\arccos{\lambda_g^{\frac{h_k}{M}}}
  )}{J\pi}+4.
\end{equation}

Given some $(\lambda_{1}',\ldots,\lambda_{k-1}',\ldots)\in \mathcal{F}_{n}(h_1,\ldots,h_{k-1})$
the contraction ratio $\lambda_k$ is freely chosen from $W=[\lambda_{\min},\lambda_{\max}]$. We can
now directly bound from above the Lebesgue measure of the set $S(\lambda_1', \ldots, \lambda_{k-1}')$
and normalize by the measure of $W$ (to obtain the distribution of the random variable $\lambda_k$).
From \cref{eq:J} and \cref{eq:Pf}, we obtain
\begin{align*}
  \frac{\mathcal{L}(S(\lambda_1',\ldots,\lambda_{k-1}'))}{\mathcal L(W)} 
  &\leq \frac{J\times (P+2)}{\Delta}\\
  &\leq \frac{J\cdot \left(\frac{2\Delta\cdot (\arccos{\lambda_g^{\frac{h_k+1}{M}}}-\arccos{\lambda_g^{\frac{h_k}{M}}} )}{J\pi}+6\right) }{\Delta}\\
  &\leq \frac{\arccos{\lambda_g^{\frac{h_k+1}{M}}}-\arccos{\lambda_g^{\frac{h_k}{M}}} }{\pi/2} + \frac{6J}{\Delta}\\
  &\leq \frac{\arccos{\lambda_g^{\frac{h_k+1}{M}}}-\arccos{\lambda_g^{\frac{h_k}{M}}} }{\pi/2}+ \frac{3\lambda_g^{\eps^2 i}}{\Delta}\cdot \frac{\arccos{\lambda_g^{\frac{h_k+1}{M}}}-\arccos{\lambda_g^{\frac{h_k}{M}}} }{\pi/2}\\
  &=\left(1+\frac{3\lambda_g^{\eps^2 i}}{\Delta} \right) \cdot \frac{\arccos{\lambda_g^{\frac{h_k+1}{M}}}-\arccos{\lambda_g^{\frac{h_k}{M}}} }{\pi/2}.
\end{align*}
This estimate holds uniformly for all choices of \((\lambda_1',\ldots,\lambda_{k-1}',\ldots)\in \in \mathcal{F}_{n}(h_1,\ldots,h_{k-1})\). We now recall that the probability measure $\mathbb P$ is a product of normalised Lebesgue measures on the infinite product $W^\N$. Thus,  
\begin{align}
  \mathbb{P}(\mathcal{G}_{h_k,k}\mid\mathcal{F}_{n}(h_1,\ldots,h_{k-1}))
    &\leq \left(1+\frac{3\lambda_g^{\eps^2 i}}{\Delta} \right) \cdot
    \frac{\arccos{\lambda_g^{\frac{h_k+1}{M}}}-\arccos{\lambda_g^{\frac{h_k}{M}}} }{\pi/2}. 
    \nonumber
    \qedhere
\end{align}
\end{proof}

With these probability estimates, we can establish good bounds for $(a_i(n,\epsilon))$. We now return to the
question of summability of $(a_i(n,\epsilon))$, which by \cref{lem:reduce,lem:ai} implies \cref{main:cor}.

\begin{proof}[Proof of  \cref{main:cor}]

 We the estimates from \cref{lem:aibound,lem:msr,lem:value}. Since these bounds
  do not depend upon the choice of \(\xi\in I_i\), for \(\eps>0, n\in\mathbb{N}\) and \(i\)
  large enough such that \(E_i>n\) we have
\begin{align}
    a_i\le& \int_{I_i}\sum_{(h_1,\ldots,h_{E_i})\in\mathbb{N}^{E_i}} \lambda_g^{\sum_{j=1}^{E_i}\frac{h_j}{M}}\cdot \mathbb P_{E_i}(C_{\eps}(n,E_i)\cap L(h_1,\ldots,h_{E_i})) d\xi\nonumber\\ 
    \leq& \int_{I_i}\sum_{(h_1,\ldots,h_{E_i})\in\mathbb{N}^{E_i}} \lambda_g^{\sum_{j=1}^{E_i}\frac{h_j}{M}}\cdot\prod_{j=n+1}^{E_i}\left(1+\frac{3\lambda_g^{\eps^2 j}}{\Delta} \right)\cdot \left(\frac{
       \arccos{\lambda_g^{\frac{h_j+1}{M}}}-\arccos{\lambda_g^{\frac{h_j}{M}}}
   }{\pi/2}\right)d\xi\nonumber\\
\leq&\left(\left(\frac{1}{\lambda_g}\right)^{i+1}-\left(\frac{1}{\lambda_g}\right)^{i}\right)
\nonumber\\
    &\hspace{2cm}\cdot\sum_{(h_1,\ldots,h_{E_i})\in\mathbb{N}^{E_i}} \lambda_g^{\sum_{j=1}^{E_i}\frac{h_j}{M}}\cdot\prod_{j=n+1}^{E_i}\left(1+\frac{3\lambda_g^{\eps^2 j}}{\Delta} \right)\cdot \left(\frac{
       \arccos{\lambda_g^{\frac{h_j+1}{M}}}-\arccos{\lambda_g^{\frac{h_j}{M}}}
   }{\pi/2}\right)\nonumber\\
   =&\left(\left(\frac{1}{\lambda_g}\right)^{i+1}-\left(\frac{1}{\lambda_g}\right)^{i}\right)  \cdot
   \prod_{i=1}^{\infty}\left(1+\frac{3\lambda_g^{\eps^2 j}}{\Delta} \right)\nonumber\\
    &\hspace{1em}\cdot \sum_{(h_1,\ldots,h_{n})\in\mathbb{N}^{n}} \lambda_g^{\sum_{j=1}^{n}\frac{h_j}{M}}
    \sum_{(h_{n+1},\ldots,h_{E_i})\in\mathbb{N}^{E_i-n}} \lambda_g^{\sum_{j=n+1}^{E_i}\frac{h_j}{M}}\cdot \prod_{j=n+1}^{E_i}\left(\frac{
       \arccos{\lambda_g^{\frac{h_j+1}{M}}}-\arccos{\lambda_g^{\frac{h_j}{M}}}
   }{\pi/2}\right)\nonumber\\
    \leq&  C_n\cdot \left(\frac{\sum_{k=0}^{\infty} \lambda_g^{\frac{k}{M}}\cdot
       \left(\arccos{\lambda_g^{\frac{k+1}{M}}}-\arccos{\lambda_g^{\frac{k}{M}}}\right)
   }{\lambda_g^{\frac{i}{E_i-n}}\pi/2}\right)^{E_i-n} \label{eq:sumai}
  \end{align}
  where $$C_n=\left(\frac{1}{\lambda_g}-1\right)\sum_{(h_1,\ldots,h_{n})\in\mathbb{N}^{n}} \lambda_g^{\sum_{j=1}^{n}\frac{h_j}{M}} \times  \prod_{i=1}^{\infty}\left(1+\frac{3\lambda_g^{\eps^2 j}}{\Delta} \right)<\infty.$$
  Now we focus on the final line of \cref{eq:sumai}. For convenience, we denote
  \(f(x):=\arccos{\lambda_g^{x}}\). Notice that  the first derivative of $f$ is:
  \[
    f'(x) = \frac{d}{dx} \arccos(\lambda_g^x) = -\frac{\lambda_g^x \ln \lambda_g}{\sqrt{1 -
    \lambda_g^{2x}}}>0,
  \]
  and it is a straightforward computation to check that the second derivative of  $f''$ is
  negative. Hence, $f$ is an increasing concave function, which together with the Mean Value
  Theorem implies that for any \(k\in\mathbb{N}\)
  \[
    f'(\tfrac{k+1}{M})\le \frac{f(\frac {k+1}M)-f(\frac kM) }{\tfrac 1M}\le f'(\tfrac kM) .
  \]
  Using this approximation, and the definition of integration for $x\mapsto
  \lambda_g^xf'(x)$, it immediately follows that the series becomes an integral
  \begin{align*}
    \lim_{M\to\infty}\sum_{k=0}^{\infty} \lambda_g^{\frac{k}{M}}\cdot
    \left(\arccos{\lambda_g^{\frac{k+1}{M}}}-\arccos{\lambda_g^{\frac{k}{M}}}\right)
	    &= \lim_{M\to\infty}\sum_{k=0}^{\infty} \frac 1M\cdot\lambda_g^{\frac{k}{M}}\cdot
	    f'(\tfrac kM)\\
	    &=\int_{0}^{\infty} -\frac{\lambda_g^{2x} \ln{\lambda_g}}{\sqrt{1 - \lambda_g^{2x}}} dx.
  \end{align*}
  Using the substitution \( u = \lambda_g^x \), we obtain 
  \[
    \int_{0}^{\infty} -\frac{\lambda_g^{2x} \ln{\lambda_g}}{\sqrt{1 - \lambda_g^{2x}}}
    dx=\int_{0}^{1} \frac{u}{\sqrt{1 - u^2}}  du= 1.
  \]
  Summarising, we have shown that 
  \[\lim_{M\to\infty}\sum_{k=0}^{\infty} \lambda_g^{\frac{k}{M}}\cdot
  \left(\arccos{\lambda_g^{\frac{k+1}{M}}}-\arccos{\lambda_g^{\frac{k}{M}}}\right)= 1.\]
  Recall that $E_i=\lfloor (1-\eps)i\rfloor$. Notice that since $\lambda_g>\frac{2}{\pi}$, there
  exists \(\eps'>0\) such that for all \(\eps<\eps'\) and all sufficiently large i we have
  \(\lambda_g^{\frac{i}{E_i-n}}>\frac{2}{\pi}\). Thus, for \(\lambda_g>\frac{2}{\pi}\), we can find
  \(M\)  and $\epsilon$ so that for $i$ large enough we have \[
    \frac{\sum_{k=0}^{\infty} \lambda_g^{\frac{k}{M}}\cdot
      \left(\arccos{\lambda_g^{\frac{k+1}{M}}}-\arccos{\lambda_g^{\frac{k}{M}}}\right)
    }{\lambda_g^{\frac{i}{E_i-n}}\pi/2}<1.
  \]
 In conclusion, by \cref{eq:sumai}, for $\lambda_g>\frac{2}{\pi}$, we can find $\eps$ and $M$
  such that for all \(n\in\mathbb{N}\) and \(i\) large enough depending on \(n\) we have \(a_i(n,\eps)\leq
  C_n\gamma^{E_i}\) where \(\gamma=\gamma(\eps, M, n)<1\). Since \(E_i=\floor{(1-\eps)i}\) and
  \((\gamma^{E_i})_{E_i}\) is summable, it follows that $(a_i(n,\eps))_i$ is summable. Thus, by \cref{lem:reduce} and
  \cref{lem:ai}, we complete the proof of \cref{main:cor}.  
\end{proof}

\begin{remark}
  We note that a method similar to the above could be applied on contraction ratios
  $(\lambda_k)$ distributed according to another probability which is absolutely continuous
  with respect to Lebesgue measure. The threshold for achieving $\widehat \mu_\omega\in L^1$
  then changes accordingly. 
\end{remark}

\section{Proof of \cref{main3}}\label{sec:main3}
The argument presented in this section can be viewed as an adaptation of the classical
Erd\H{o}s-Kahane argument \cite{Kahane1979, Erdos1939} to the random setting.

We are aiming to show that for almost all $\omega\in W^{\mathbb{N}}$, there exist some $\rho,
C>0$, such that for all $\xi\neq 0$ we have
\[|\widehat{\mu}_\omega(\xi)|\leq C|\xi|^{-\rho}.\]
Recall the notation from \cref{sec:main}. In particular, given $\eps>0$, for each $i\in \N$ we let let $E_i=E_i(\epsilon)=\lfloor (1-\eps)i\rfloor$. 

Set \(X:\mathbb{N} \to \mathbb{N}\) to be a function to be determined later. For any \(i\in \mathbb{N}\) and \(l\in\{0,\ldots,X(i)-1\}\), we let 
\[
  \xi_{i,l}:=\frac{1}{\lambda_g^i}+\frac{1}{\lambda_g^i}\left(\frac{1}{\lambda_g}-1\right)\cdot
  \frac{l}{X(i)}.
\]
The points $\xi_{i, 0}, \ldots, \xi_{i, X(i)-1}\in I_{i}$ are $\eta_{i}$-dense in $I_{i}$ where $\eta_i$ is given by 
\[
  \eta_i:= \frac{\lambda_g^{-i-1}-\lambda_g^{-i}}{X(i)}.
\]
Given $i\in \mathbb{N}$ we let
\[
  S_i:=\{\xi_{i,l}\colon l\in\{0,\ldots,X(i)-1\} \}.
  \]
 
For any $\omega\in W^{\N}$ the function $\hat{\mu}_{\omega}$ is Lipschitz continuous. It is a consequence of this property that to prove \cref{main3}, it is sufficient to establish the desired polynomial decay on a suitably dense countable subset of frequencies. The significance of this reduction is that it allows us to meaningfully apply the Borel-Cantelli lemma. The sequence $(\xi_{i,l})_{i,l}$ will take on the role of this countable subset. We emphasise that the density of this sequence is determined by the function $X$. With this strategy in mind, we begin by studying the behaviour of $\hat{\mu}_{\omega}$ only along the fixed sequence of
frequencies $(\xi_{i,l})_{i,l}$. As we have seen in the previous section, an upper bound for $|\widehat{\mu}_{\omega}(\xi)|$ can be derived from knowledge on the
behaviour of the products $\pi\xi\prod_{k=1}^j \lambda_k$ modulo one. In particular, \cref{lem:form} shows that if these products often take values away from $\mathbb{Z}\pi,$ then this gives a strong upper bound for $|\hat{\mu}_{\omega}(\xi)|$. With this observation in mind, for  $\theta\in (0,\pi/2)$ let
\[ V_{\theta}:=[\theta,\pi-\theta]. 
\] The next lemma formalises this connection. 

\begin{lemma}\label{PD*}
 Let $\alpha\in(0,1)$ and $\theta\in (0,\frac{\pi}{2})$. If $\omega=(\lambda_k)\in W^\mathbb{N}$
  satisfies
  \begin{equation}\label{eq:suff}
    \liminf_{i\to \infty} \min_{\xi_{i,l}\in S_i}\frac{\#\{j\leq
    E_i: \pi\prod_{k=1}^j \lambda_k \xi_{i,l} \in V_{\theta}+ \mathbb{Z}\pi   \} }{i}\geq \alpha  , 
  \end{equation}
  then for some $\rho>0$ depending upon $\alpha, \theta$ and $\lambda_{g}$,  if $i$ is large enough depending on $\omega,$ then for all $\xi_{i,l}\in
  S_i$ we have $|\widehat{\mu}_{\omega}(\xi_{i,l})|\leq \xi_{i,l}^{-\rho}$. 
\end{lemma}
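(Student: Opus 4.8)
The plan is to combine the product formula of \cref{lem:form} with the hypothesis \eqref{eq:suff}. The mechanism is simple: whenever $\pi\prod_{k=1}^{j}\lambda_k\,\xi\in V_\theta+\mathbb Z\pi$ the $j$-th cosine factor is bounded away from $1$, so having a positive proportion (of order $\alpha i$) of the first $E_i$ factors of this type forces $|\widehat\mu_\omega(\xi)|$ to decay geometrically in $i$; and since $\xi_{i,l}\asymp\lambda_g^{-i}$, geometric decay in $i$ is exactly polynomial decay in $\xi_{i,l}$.

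First I would record the elementary pointwise estimate that $|\cos y|\le\cos\theta<1$ for every $y\in V_\theta+\mathbb Z\pi=[\theta,\pi-\theta]+\mathbb Z\pi$, which follows from the $\pi$-periodicity of $|\cos|$ and the monotonicity of $\cos$ on $[0,\pi]$ (together with $\cos(\pi-\theta)=-\cos\theta$). Now fix any $\delta\in(0,\alpha)$, say $\delta=\alpha/2$, and set
\[
  \rho:=\frac{(\alpha-\delta)\log(1/\cos\theta)}{2\log(1/\lambda_g)}>0,
\]
which depends only on $\alpha,\theta,\lambda_g$ (note $\cos\theta\in(0,1)$ and $\lambda_g\in(0,1)$, so both logarithms are positive). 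By \eqref{eq:suff} there is an index $i_0=i_0(\omega)$ such that for every $i\ge i_0$ and every $\xi_{i,l}\in S_i$ the set $J_{i,l}:=\{\,j\le E_i:\pi\prod_{k=1}^{j}\lambda_k\,\xi_{i,l}\in V_\theta+\mathbb Z\pi\,\}$ has $\#J_{i,l}\ge(\alpha-\delta)i$. Truncating the product in \cref{lem:form} at $j=E_i$ (each factor is at most $1$), using the pointwise estimate on the factors indexed by $J_{i,l}$ and the trivial bound on the others, I obtain for $i\ge i_0$ and $\xi_{i,l}\in S_i$
\[
  |\widehat\mu_\omega(\xi_{i,l})|\le\prod_{j=1}^{E_i}\left|\cos\!\left(\pi\xi_{i,l}\prod_{k=1}^{j}\lambda_k\right)\right|\le(\cos\theta)^{\#J_{i,l}}\le(\cos\theta)^{(\alpha-\delta)i}.
\]

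It then remains to compare this with $\xi_{i,l}^{-\rho}$. Since $\xi_{i,l}\in I_i=[\lambda_g^{-i},\lambda_g^{-i-1})$ we have $\xi_{i,l}<\lambda_g^{-i-1}$, hence $\xi_{i,l}^{-\rho}>\lambda_g^{\rho(i+1)}$. Taking logarithms, the inequality $(\cos\theta)^{(\alpha-\delta)i}\le\lambda_g^{\rho(i+1)}$ is equivalent to $\rho\le\frac{(\alpha-\delta)\,i}{i+1}\cdot\frac{\log(1/\cos\theta)}{\log(1/\lambda_g)}$, which holds for every $i\ge1$ because $i/(i+1)\ge1/2$ and by the choice of $\rho$. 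Chaining this with the previous display gives, for all $i\ge i_0(\omega)$ and all $\xi_{i,l}\in S_i$, $|\widehat\mu_\omega(\xi_{i,l})|\le(\cos\theta)^{(\alpha-\delta)i}\le\lambda_g^{\rho(i+1)}<\xi_{i,l}^{-\rho}$, as claimed.

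I do not expect a real obstacle in this lemma: it is essentially bookkeeping that converts ``a positive density of small cosine factors'' into geometric decay in $i$, and then into polynomial decay in $\xi$ via the scale $\xi\asymp\lambda_g^{-i}$ built into the partition $(I_i)$. The only points needing minor attention are the uniformity over $\xi_{i,l}\in S_i$, which is already encoded by the $\min$ in \eqref{eq:suff}, and checking that $\rho$ does not depend on $\omega$ --- which is clear, since the sole $\omega$-dependence is the threshold $i_0$, absorbed into the phrase ``$i$ large enough depending on $\omega$''. (The substantial work of the section, namely verifying that \emph{almost every} $\omega$ satisfies \eqref{eq:suff} for a suitable choice of $\alpha$ and $\theta$, will be carried out in the lemmas that follow.)
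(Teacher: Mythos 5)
Your proof is correct and follows essentially the same route as the paper: truncate the infinite product from \cref{lem:form} at $E_i$, bound the roughly $\alpha i$ factors indexed by the hypothesis by $\cos\theta$, and convert the resulting geometric decay in $i$ into polynomial decay via $\xi_{i,l}\in I_i=[\lambda_g^{-i},\lambda_g^{-i-1})$. Your only deviation is the slightly more careful handling of the $\liminf$ through the buffer $\delta=\alpha/2$ (the paper writes the exponent $i\alpha$ directly), which is a harmless refinement that merely changes the value of $\rho$.
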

\begin{proof}
  Let $\omega=(\lambda_k)\in W^{\mathbb{N}}$ be such that there exist a pair of parameters $\alpha$
  and $\theta$ such that
  \begin{equation*}  
    \liminf_{i\to \infty}\min_{\xi_{i,l}\in S_i}\frac{\#\{j\leq
    E_i: \pi\prod_{k=1}^j \lambda_k \xi_{i,l} \in V_{\theta}+ \mathbb{Z}\pi   \} }{i}\geq \alpha. 
  \end{equation*}
  Then by \cref{lem:form},  for all large enough $i$ and all \(l\in\{0,\ldots,X(i)-1\} \), we have
  \begin{equation}\label{eq:polbound}
    |\widehat{\mu}_\omega(\xi_{i,l})|\leq \prod_{j=1}^{E_i} \left|\cos\left(\pi \prod_{k=1}^j
    \lambda_k \xi_{i,l}\right)\right|\le (\cos \theta)^{i\alpha}. 
  \end{equation}
 Recalling the definition of $I_i$ and that $S_i\subset I_i$ for all $i$, we know that 
  \[
   \lambda_g^{-i} \leq \xi_{i,l}\leq \lambda_g^{-(i+1)}. 
  \]
  Therefore, for $i$ large enough, since $\xi_{i,l}\in S_i$ we can rewrite \cref{eq:polbound} in
  terms of $\xi_{i,l}$ as
  \[
    |\widehat{\mu}_\omega(\xi_{i,l})|\leq {(\lambda_g^{-i})}^{-\frac{\alpha\log{\cos\theta}}{\log
    \lambda_g}}= {(\lambda_g^{-i-1})}^{-\frac{i\alpha\log{\cos\theta}}{(i+1)\log \lambda_g}}\leq
    \xi_{i,l}^{-\frac{i\alpha\log{\cos\theta}}{(i+1)\log \lambda_g}}\leq
    \xi_{i,l}^{-\frac{\alpha\log{\cos\theta}}{2\log \lambda_g}}.
    \qedhere
  \]
\end{proof}
In summary, \cref{PD*} demonstrates that for all $i$ sufficiently large, if the
proportion of $j$ for which \(\pi\prod_{k=1}^j \lambda_k \xi_{i,l}\) belongs to a fixed
region bounded away from $\mathbb Z \pi$ exceeds \(\alpha\) for any $\xi_{i,l}\in S_i$, then $\widehat{\mu}_{\omega}$ will eventually satisfy a polynomial decay rate along the sequence \((\xi_{i,l})_{i,l}\).
Because of this, the probability of the event \(\pi\prod_{k=1}^{j}\lambda_k \xi_{i,l}\in
V_{\theta}+\mathbb{Z}\pi\) is important. In \cref{sec:main}, we studied a similar question of
`hitting probability' in \cref{lem:value}. The argument that we will now give follows a similar outline.

Recall the definitions of \(B_{\eps}(n)\) from \cref{eq:defB}  and $\mathcal
A_\eps(k)$ from \cref{eq:defA}. Let \(\theta\in (0,\frac{\pi}{4})\) be given, and let
\(j,i\in\mathbb{N}\), \(l\in \{0,\ldots,X(i)-1\}\) and \(a\in\{0,1\}\).
Define $\mathcal{I}_{j}(\xi_{i,l},\theta)$ and $\Xi_{j,i,l}(a)$ by setting 
\[
  \mathcal{I}_{j}(\xi_{i,l},\theta)=\left\{(\lambda_1, \lambda_2,\ldots)\in W^{\N}\colon \pi\prod_{k=1}^j \lambda_k
  \xi_{i,l} \in V_{\theta}+ \mathbb Z\pi\right\}.   	
\]  
and
\[
  \Xi_{j,i,l}(a)=\begin{cases}
    \mathcal{I}_{j}(\xi_{i,l},\theta) ,& \text{if } a=1\\
    W^{\mathbb{N}}\setminus  \mathcal{I}_{j}(\xi_{i,l},\theta), & \text{if } a=0.
  \end{cases}
\]
Note that each $\Xi_{j,i,l}(a)$ is a Borel set and an element of $\mathfrak{F}_{j}$. Here $\mathfrak{F}_{j}$ is the $\sigma$-algebra induced by the random variables $\lambda_1,\ldots,\lambda_{j}$, i.e. $\mathfrak{F}_{j} =
\sigma(\lambda_1,\ldots,\lambda_{j}).$ Moreover, given $n,i\in \N$, $m\geq n,$  $(a_1,\ldots,a_{m})\in \{0,1\}^{m}$ and \(l\in\{0,\ldots, X(i)-1\}\), we define (similarly as in \cref{eq:calF}),
\[
  \mathcal{F}'_{n}(i,l;a_1,\ldots,a_{m}):= \bigcap_{k=1}^{(n-1)} \Xi_{k,i,l}(a_i)
  \;\cap\; \bigcap_{k=n}^m (\Xi_{k,i,l}(a_i)\cap \mathcal{A}_{\eps}(k)).
\]

The proof of \cref{Lemma:prob2} below is similar to the proof of
\cref{lem:value}. We omit a reasonable amount of analogous details so as to avoid repetition.  

\begin{lemma}\label{Lemma:prob2}
  Let $\eps\in(0,1)$, $n\in \N$, $i$ be sufficiently large that $E_i>n$ and $n< m\leq E_i$.
  Denote $\Delta=\lambda_{\max}-\lambda_{\min}$. Then for all \(l\in\{0,\ldots,X(i)-1\}\) and $(a_1, \ldots, a_{m})\in \{0,1\}^{m}$ we have
  \begin{align}\label{BC3}
    \mathbb{P}_{E_i}(\Xi_{m,i,l}(a_m)\mid
    {\mathcal{F}_{n}'}(i,l,a_1,\ldots,a_{m-1}))  
    \leq \begin{cases}
      \left(1+\frac{3\lambda_g^{\eps^2 m}}{\Delta} \right)\cdot \left(1-\frac{2\theta}{\pi}\right) ,&
      \text{if } a_m=1\\
      \left(1+\frac{3\lambda_g^{\eps^2 m}}{\Delta} \right)\cdot \frac{2\theta}{\pi}, & \text{if }
      a_m=0.
    \end{cases}
  \end{align}

\end{lemma}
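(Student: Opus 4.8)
The plan is to mirror the proof of \cref{lem:value} closely, since \cref{Lemma:prob2} is essentially the same statement with the fine partition $V(h,M)$ replaced by the single dyadic-type event $V_\theta$ and its complement. First I would fix all the parameters $\eps, n, i, m, l, (a_1, \ldots, a_m)$ as in the statement, and condition on a representative element $(\lambda_1', \ldots, \lambda_{m-1}', \ldots) \in \mathcal{F}'_{n}(i,l;a_1,\ldots,a_{m-1})$. The key point is that on this event we have the lower bound $\prod_{k=1}^{m-1}\lambda_k' \geq \lambda_g^{(1+\eps)(m-1)}$ coming from the $\mathcal{A}_\eps(k)$ constraints, exactly as in \cref{lem:value}. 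I would then write down the set $S(\lambda_1', \ldots, \lambda_{m-1}') = \{\lambda_m \in W : \pi \lambda_m \xi_{i,l}\prod_{k=1}^{m-1}\lambda_k' \in V_\theta + \mathbb{Z}\pi\}$ (for the case $a_m = 1$). As before, this is a union of $P$ full intervals, each of the same length
\[
  J = \frac{(\pi - \theta) - \theta}{\pi \xi_{i,l}\prod_{k=1}^{m-1}\lambda_k'} = \frac{\pi - 2\theta}{\pi \xi_{i,l}\prod_{k=1}^{m-1}\lambda_k'},
\]
together with at most two truncated intervals of length at most $J$; and the spacing between consecutive copies is $(\xi_{i,l}\prod_{k=1}^{m-1}\lambda_k')^{-1}$.

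Next I would run the same two estimates on $J$ and $P$. Using $\xi_{i,l} \geq \lambda_g^{-i}$ (since $\xi_{i,l} \in S_i \subset I_i$), $\prod_{k=1}^{m-1}\lambda_k' \geq \lambda_g^{(1+\eps)(m-1)}$, together with $m - 1 < E_i \leq (1-\eps) i$, I get $J \leq \frac{\pi - 2\theta}{\pi} \lambda_g^{\eps^2 i} \cdot \frac{2}{\pi}$ type bound — more precisely the same chain of inequalities as \crefrange{eq:J0}{eq:J} shows $J \lesssim \lambda_g^{\eps^2 i}$ times the length of the target arc, and here we would actually want $\lambda_g^{\eps^2 m}$ in the final statement; this is fine because $m \leq E_i \leq i$ so $\lambda_g^{\eps^2 i} \leq \lambda_g^{\eps^2 m}$ (recall $\lambda_g < 1$), hence the $i$-bound implies the $m$-bound. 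The count $P$ of full intervals inside $W$ is bounded by $P \leq 2\lfloor \Delta \xi_{i,l}\prod_{k=1}^{m-1}\lambda_k' \rfloor + 4$, and substituting the value of $J$ gives $P \leq \frac{2\Delta(\pi - 2\theta)}{J\pi} + 4$. Then
\[
  \frac{\mathcal{L}(S(\lambda_1', \ldots, \lambda_{m-1}'))}{\mathcal{L}(W)} \leq \frac{J(P+2)}{\Delta} \leq \frac{2(\pi - 2\theta)/\pi + 6J/\Delta \cdot (\pi/(\pi-2\theta)) }{\ \pi\ } \cdots
\]
working out (just as in \cref{lem:value}) to $\left(1 + \frac{3\lambda_g^{\eps^2 m}}{\Delta}\right)\left(1 - \frac{2\theta}{\pi}\right)$ once one notices $(\pi - 2\theta)/\pi = 1 - 2\theta/\pi$. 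Since $\mathbb{P}$ is the product of normalised Lebesgue measures on $W^\N$, this Lebesgue-measure ratio is precisely the conditional probability $\mathbb{P}_{E_i}(\Xi_{m,i,l}(1) \mid \mathcal{F}'_n(i,l;a_1,\ldots,a_{m-1}))$, and the estimate is uniform over the conditioning set, giving the $a_m = 1$ case.

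For the case $a_m = 0$ I would note that $\Xi_{m,i,l}(0)$ is the complement of $\Xi_{m,i,l}(1)$ within $W^\N$; equivalently, the relevant subset of $\lambda_m \in W$ is the complement in $W$ of $S(\lambda_1', \ldots, \lambda_{m-1}')$, which is a union of intervals corresponding to the two ``short arcs'' $[0,\theta) \cup (\pi - \theta, \pi]$ modulo $\pi$ rather than the ``long arc'' $[\theta, \pi - \theta]$. Repeating the same argument with arc-length $2\theta$ in place of $\pi - 2\theta$ in the definition of $J$ (the spacing between copies is unchanged, being $(\xi_{i,l}\prod_{k=1}^{m-1}\lambda_k')^{-1}$ regardless of which arc we track) yields $\frac{\mathcal{L}(\cdot)}{\mathcal{L}(W)} \leq \left(1 + \frac{3\lambda_g^{\eps^2 m}}{\Delta}\right)\cdot\frac{2\theta}{\pi}$. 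The assumption $\theta \in (0, \pi/4)$ guarantees $2\theta/\pi < 1/2 < 1$, so both probabilities are genuinely less than $1$, which is what is needed downstream. I do not expect any serious obstacle here; the only mild care needed is the bookkeeping that the $\lambda_g^{\eps^2 i}$ error term from the $\xi$-and-product bounds can be absorbed into the stated $\lambda_g^{\eps^2 m}$ term using $m \leq i$ and $\lambda_g < 1$, and the observation that $\arccos$ no longer appears because the target region $V_\theta$ is specified directly in angle rather than via $\arccos \lambda_g^{h/M}$. All of this is exactly parallel to \cref{lem:value}, which is why it is reasonable to omit the repeated details as the statement announces.
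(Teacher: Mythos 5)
Your proposal follows essentially the same route as the paper: condition on a representative of $\mathcal{F}'_n(i,l;a_1,\ldots,a_{m-1})$, describe the admissible set of $\lambda_m$ as a union of equally spaced intervals, bound the interval length $J$ via $\xi_{i,l}\geq\lambda_g^{-i}$ and $\prod_{k=1}^{m-1}\lambda_k'\geq\lambda_g^{(1+\eps)(m-1)}$ (valid since $m-1\geq n$ puts $\mathcal{A}_\eps(m-1)$ in the conditioning event), count the intervals meeting $W$, and conclude from the product structure of $\mathbb{P}$ — exactly the adaptation of \cref{lem:value} the paper carries out (the paper writes out the $a_m=0$ case and declares $a_m=1$ analogous, while you sketch both). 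The only differences are cosmetic bookkeeping (you track one arc of length $2\theta$ or $\pi-2\theta$ per period rather than two half-arcs, and your middle display is garbled), which does not affect the validity of the argument or the stated bound.
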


\begin{proof}
  We just consider the case $a_m=0$. The case where $a_{m}=1$ is analogous. In this case, from the definition of $\Xi_{m,i,l}(0)$ we are interested in those $(\lambda_{1}, \lambda_{2}, \ldots)$ for which  $\pi\prod_{k=1}^m \lambda_k \xi_{i,l}
  \notin V_{\theta}+ \mathbb Z\pi$.  

  Now, let $(\lambda'_1,\ldots,\lambda'_{m-1},\ldots)\in\mathcal{F}_{n}'(i,l;a_1,\ldots,a_{m-1})$.   Keeping in mind that $a_m=0$, we define 
  \begin{align*}
    S'(\lambda_1', \ldots, \lambda_{m-1}'):=&\left\{\lambda_m\in W\colon \pi\prod_{k=1}^m \lambda_k \xi_{i,l}\notin
    V_{\theta}+\mathbb{Z}\pi \text{ and }
  (\lambda_1,\ldots,\lambda_{m-1})=(\lambda'_1,\ldots,\lambda'_{m-1}) \right\}. 
  \end{align*}
  Hence
  \begin{align*}
     S'(\lambda_1', \ldots, \lambda_{m-1}')
    =W\cap\bigcup_{k\in\mathbb{Z}}
    \left(\bigg[\frac{-\theta}{\pi\xi_{i,l}\prod_{j=1}^{m-1}\lambda_j'},\frac{\theta}{\pi\xi_{i,l}\prod_{j=1}^{m-1}\lambda_j'}\bigg)
    +\frac{k}{\xi_{i,l}\prod_{j=1}^{m-1}\lambda_j'}\right).
  \end{align*}
 
Just as in the proof of \cref{lem:value},   the set \(S'(\lambda_1', \ldots, \lambda_{m-1}')\) is a
  union of some large number $P'$ of intervals of some small length $J'= 
  \theta (\pi\xi\prod_{j=1}^{m-1}\lambda_j')^{-1}$, and at most $2$
  intervals of length less than $J'$. By a similar method as used in the proof of \cref{lem:value}, we can prove the following bound
  \begin{equation}\label{eq:J'}
    J'\leq \frac{\theta}{\pi  \lambda_g^{-\eps^2 m}},
  \end{equation}
  and
  \begin{equation}\label{eq:P'}
     P\leq
    2\floor{\Delta\xi_{i,l}\prod_{j=1}^{m-1}\lambda_j'}+4.
  \end{equation} 
  Thus, from a computation much like that appearing in the proof of \cref{lem:value} but
  using the inequalities \cref{eq:J',eq:P'}, we have
  \begin{align*}
    \frac{\mathcal{L}(S'(\lambda_1',\ldots,\lambda_{m-1}'))}{\mathcal L(W)} \leq \frac{J'\times
    (P'+2)}{\Delta}
    &\leq\left(1+\frac{3\lambda_g^{\eps^2 m}}{\Delta} \right) \cdot \frac{2\theta }{\pi}.
  \end{align*}
  This estimate holds uniformly for all \((\lambda_1',\ldots,\lambda_{m-1}',\ldots)\in
  \mathcal{F}_{n}'(i,l;a_1,\ldots,a_{m-1})\). Now, recall that the probability measure $\mathbb P$ is the product of normalised Lebesgue measures on $W^\N$. Hence, 
  \begin{align*}
    \mathbb{P}(\Xi_{m,i,l}(a_m)\mid\mathcal{F}'_{n}(i,l;a_1,\ldots,a_{m-1}))
    \leq \left(1+\frac{3\lambda_g^{\eps^2 m}}{\Delta} \right) \cdot \frac{2\theta }{\pi}. 
  \end{align*}
\end{proof}

Given the probability estimates established above, and the reduction provided by \cref{PD*}, we are in a
position to apply a Borel-Cantelli argument over the frequencies in the sequence $(\xi_{i,l})$. That
is how, in the following lemma, we prove that the polynomial decay inequality is satisfied in the
tail of the sequence $(\xi_{i,l})$, for almost every \(\omega \in B_{\eps}(n)\).

\begin{lemma} \label{Lemma PD1}
  Let $\eps\in(0,1).$ If there exists \(\theta\in(0,\frac{\pi}{4})\), \(\alpha\in(0,1-\frac{2\theta}{\pi})\) and  \(X:\mathbb{N}\to\mathbb{N}\) such
  that if we let $p=1-\frac{2\theta}{\pi}$ and
  \[\sum_{i=1}^{\infty}X(i)\cdot \left(\frac{2\theta}{\pi}\right)^{-\eps i} \cdot e^{ -i
  \left(\alpha \log\left( \frac{\alpha}{p} \right) + (1 - \alpha) \log\left( \frac{1 - \alpha}{1 -
p} \right)\right)}<\infty,\]
  then there exists $\rho>0$ depending upon $\theta,\alpha$ and $\lambda_{g}$, such that for almost all $\omega$ there exists some
  $K_{\omega}>0$ depending upon \(\omega\) such that for all $i>K_{\omega}$ and
   \(l\in\{0,\ldots,X(i)-1\}\) we have
  \begin{align}
    |\widehat{\mu}_\omega(\xi_{i,l})|\leq \xi_{i,l}^{-\rho}.
    \nonumber
  \end{align}
\end{lemma}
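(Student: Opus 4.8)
Our plan is to deduce the statement from \cref{PD*}: it suffices to prove that its sufficient condition \cref{eq:suff} holds, for the given $\alpha$ and $\theta$, for almost every $\omega$, and for this we run a Borel--Cantelli argument over the frequencies $\xi_{i,l}$ based on the conditional hitting probabilities of \cref{Lemma:prob2}. Fix the data $\theta,\alpha,X$ supplied by the hypothesis and write $p=1-\tfrac{2\theta}{\pi}$, so $0<\alpha<p<1$. Since the bounds of \cref{Lemma:prob2} require conditioning on the events $\mathcal A_\eps(k)$, we argue inside a fixed $B_\eps(n)$; as $\mathbb P\!\left(\bigcup_{n\ge1}B_\eps(n)\right)=1$ by the law of large numbers, it is enough to show that for each $n$, \cref{eq:suff} holds for almost every $\omega\in B_\eps(n)$. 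On $B_\eps(n)$ we have $\mathcal A_\eps(k)$ for all $k\ge n$, and since discarding the finitely many indices $j\le n$ alters the count in \cref{eq:suff} by $o(i)$, we may work only with $j\in\{n+1,\ldots,E_i\}$. For such $i$ (with $E_i>n$) and $l\in\{0,\ldots,X(i)-1\}$ set
\[
  \mathcal D_{i,l}:=\Bigl\{\,\#\{\,n<j\le E_i:\ \omega\in\Xi_{j,i,l}(1)\,\}<\alpha i\,\Bigr\}\cap B_\eps(n).
\]
If $\omega\in B_\eps(n)$ lies in only finitely many of the sets $\bigcup_{l}\mathcal D_{i,l}$, then $\omega$ satisfies \cref{eq:suff}; so it suffices to prove $\sum_{i}\mathbb P\!\left(\bigcup_{l}\mathcal D_{i,l}\right)<\infty$ and invoke the convergence half of Borel--Cantelli, and by a union bound this rests on bounding a single $\mathbb P(\mathcal D_{i,l})$.

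To estimate $\mathbb P(\mathcal D_{i,l})$ we decompose over the sign patterns $\mathbf a=(a_1,\ldots,a_{E_i})\in\{0,1\}^{E_i}$ with $\#\{n<k\le E_i:a_k=1\}<\alpha i$. On $B_\eps(n)$ the pattern event $\bigcap_{k\le E_i}\Xi_{k,i,l}(a_k)$ is contained in $\mathcal F'_n(i,l;a_1,\ldots,a_{E_i})$, and telescoping along the filtration $(\mathfrak F_k)$ exactly as in the proof of \cref{lem:msr} and then applying \cref{Lemma:prob2} at each step gives
\[
  \mathbb P\bigl(\mathcal F'_n(i,l;a_1,\ldots,a_{E_i})\bigr)\ \le\ C'\prod_{k=n+1}^{E_i}q_{a_k},\qquad q_1=p,\quad q_0=1-p,
\]
where $C'=\prod_{k\ge1}\bigl(1+\tfrac{3\lambda_g^{\eps^2 k}}{\Delta}\bigr)<\infty$ absorbs the error factors. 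Summing over the admissible patterns yields, with a constant $C_n$ depending only on $n$,
\[
  \mathbb P(\mathcal D_{i,l})\ \le\ C_n\sum_{s<\alpha i}\binom{E_i-n}{s}p^{s}(1-p)^{E_i-n-s}\ =\ C_n\,\mathbb P\bigl(\mathrm{Bin}(E_i-n,p)<\alpha i\bigr).
\]
Because $E_i-n\sim(1-\eps)i$ and $\alpha<p$, the standard Chernoff estimate for the lower tail of a binomial gives, for all large $i$,
\[
  \mathbb P\bigl(\mathrm{Bin}(E_i-n,p)<\alpha i\bigr)\ \le\ \Bigl(\tfrac{2\theta}{\pi}\Bigr)^{-\eps i}\exp\!\left(-i\Bigl(\alpha\log\tfrac{\alpha}{p}+(1-\alpha)\log\tfrac{1-\alpha}{1-p}\Bigr)\right),
\]
so that $\mathbb P(\bigcup_{l}\mathcal D_{i,l})\le C_n X(i)\bigl(\tfrac{2\theta}{\pi}\bigr)^{-\eps i}\exp\!\left(-i\bigl(\alpha\log\tfrac{\alpha}{p}+(1-\alpha)\log\tfrac{1-\alpha}{1-p}\bigr)\right)$, which is summable in $i$ by hypothesis. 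Borel--Cantelli then proves \cref{eq:suff} on each $B_\eps(n)$, hence for almost every $\omega$, and \cref{PD*} produces the exponent $\rho>0$ (depending only on $\theta,\alpha,\lambda_g$) and, for each such $\omega$, a threshold $K_\omega$ beyond which $|\widehat\mu_\omega(\xi_{i,l})|\le\xi_{i,l}^{-\rho}$ for all $l$.

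I expect the main obstacle to be the Chernoff step: one must pass from the path-dependent one-step bounds of \cref{Lemma:prob2} --- which only control the conditional probability that a single index $j$ lands in $V_\theta+\mathbb Z\pi$ --- to a clean exponential bound on the count carrying the sharp Kullback--Leibler exponent, while simultaneously absorbing the factor $\bigl(\tfrac{2\theta}{\pi}\bigr)^{-\eps i}$ that is forced on us by the fact that we insist on $\alpha i$ good indices while only $E_i\approx(1-\eps)i$ are available. The dependence is neutralised by the telescoping of \cref{lem:msr}, and the slack by a short convexity computation relating $(1-\eps)D\!\left(\tfrac{\alpha}{1-\eps}\,\|\,p\right)$ to $D(\alpha\|p)$ (here $D(\cdot\|\cdot)$ denotes the binary Kullback--Leibler divergence); everything else --- the passage to $B_\eps(n)$, the union bound, Borel--Cantelli, and the appeal to \cref{PD*} --- is routine.
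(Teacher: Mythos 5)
Your proposal is correct and follows essentially the same route as the paper's proof: reduction via \cref{PD*} to the counting condition \cref{eq:suff}, restriction to $B_\eps(n)$ via the law of large numbers, decomposition over $0$--$1$ patterns with the tower-property telescoping of \cref{lem:msr} combined with the conditional estimates of \cref{Lemma:prob2}, a binomial/Chernoff bound, a union bound over $l$ giving the factor $X(i)$, and Borel--Cantelli followed by \cref{PD*}. The only (harmless) difference is the bookkeeping of the factor $\left(\frac{2\theta}{\pi}\right)^{-\eps i}$: the paper bounds $\binom{E_i}{\ell}\leq\binom{i}{\ell}$ and pulls out $\left(\frac{2\theta}{\pi}\right)^{E_i-n-i}$ so that Chernoff is applied to $i$ trials at threshold $\alpha<p$, whereas your direct estimate of $\mathbb{P}\!\left(\mathrm{Bin}(E_i-n,p)<\alpha i\right)$ indeed needs the convexity step you flag (convexity of $D(\cdot\,\|\,p)$ gives $(1-\eps)D\!\left(\tfrac{\alpha}{1-\eps}\,\|\,p\right)+\eps\log\tfrac{1}{1-p}\geq D(\alpha\,\|\,p)$, and the summability hypothesis forces $\alpha<(1-\eps)p$ so the lower-tail bound is applicable), which holds up to constant factors that your $C_n$ already absorbs.
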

\begin{proof}
  Fix $\eps\in(0,1).$ Let $\theta, \alpha$ and $X$ be such that the hypothesis of our lemma is satisfied. By \cref{PD*}, the condition \cref{eq:suff} is a
  sufficient condition to deduce our desired conclusion. Hence to prove our lemma, it suffices to prove that for our specific $\theta$ and $\alpha$ we have
  \begin{equation}
  	\label{eq:uniform count}
  	\liminf_{i\to \infty}\min_{\xi_{i,l}\in S_i}\frac{\#\{j\leq E_i\colon \pi\prod_{k=1}^j \lambda_k
  		\xi_{i,l} \in V_{\theta} + \mathbb Z\pi \}}{i}\geq \alpha
  \end{equation}
  for almost all $\omega$. Moreover, because $\cup_{n\in \N} B_{\epsilon}(n)$ equals $W^{\N}$ modulo a set of measure zero, to prove our result it suffices to show that \cref{eq:uniform count} holds for almost every $\omega\in B_{\epsilon}(n)$ for any $n\in \N$. With this in mind we now fix $n\in \N$. Let $i\in \N$ be sufficiently large that $E_i>n$. Then for a given \(l\in\{0, \ldots, X(i)-1\}\) we have
  \begin{align*}
    &\mathbb{P}\left(\left\{\omega\colon \frac{\#\{j\leq E_i\colon
	\pi\prod_{k=1}^j \lambda_k \xi_{i,l} \in V_{\theta}+ \mathbb Z\pi   \}}{i} <
    \alpha\right\} \cap B_{\eps}(n)\right)\nonumber\\ 
    \leq &\sum_{\substack{(a_1,\ldots,a_{E_i})\in \{0,1\}^{E_i}\\\sum_{k=1}^{E_i}a_i<\alpha i}}
    \mathbb{P}_{E_i}\left[\bigcap_{k=1}^{n-1}\Xi_{k,i,l}(a_i)\cap \left(\bigcap_{j=n}^{E_i}
    \Xi_{j,i,l}(a_j)\cap A_\eps(j)\right)\right]. 
  \end{align*}

  By the tower law of conditional
  probabilities and using the same incremental measurability argument as used in the proof of \cref{lem:msr}, for $(a_1,\ldots,a_{E_i})\in \{0,1\}^{E_i}$ we have
  \begin{align}\label{BC2}
  &\mathbb{P}_{E_i}\left[\bigcap_{k=1}^{n-1}\Xi_{k,i,l}(a_i)\cap \left(\bigcap_{j=n}^{E_i}
  \Xi_{j,i,l}(a_j)\cap A_\eps(j)\right)\right]\nonumber\\
    =& \mathbb{P}_{E_i}(\mathcal{F}'_{n}(i,l;a_1,\ldots,a_{E_i}))\nonumber\\
    \le& \prod_{k=n+1}^{E_i} \mathbb{P}_{E_i}(\Xi_{k,i,l}(a_k)\cap A_{\eps}(k)\mid
    \mathcal{F}'_{n}(i,l;a_1,\ldots,a_{k-1}))\nonumber\\
    \le& \prod_{k=n+1}^{E_i} \mathbb{P}_{E_i}(\Xi_{k,i,l}(a_k)\mid
    \mathcal{F}'_{n}(i,l;a_1,\ldots,a_{k-1})).
  \end{align}
 Suppose now that $(a_1,\ldots,a_{E_i})\in \{0,1\}^{E_i}$ is such that $\sum_{k=1}^{E_i}a_i=\ell$, then by \cref{Lemma:prob2} and using the inequality $\frac{2\theta}{\pi}<1-\frac{2\theta}{\pi}\footnote{This follows from our assumption $\theta\in (0,\pi/4)$.},$ it can be shown that 
\begin{equation}
	\label{eq:Prob bound}
	\prod_{k=n+1}^{E_i} \mathbb{P}_{E_i}(\Xi_{k,i,l}(a_k)\mid
	\mathcal{F}'_{n}(i,l;a_1,\ldots,a_{k-1}))\leq  \prod_{k=n+1}^{E_i}
	\left(1+\frac{3\lambda_g^{\eps^2 k}}{\Delta} \right)\cdot
	\left(1-\frac{2\theta}{\pi}\right)^{\ell}\cdot \left(\frac{2\theta}{\pi}\right)^{E_i-n-\ell}.
\end{equation}

 Thus, combining \cref{BC2,BC3,eq:Prob bound}, and considering all sequences
  \((a_1,\ldots,a_{E_i})\in \{0,1\}^{E_i}\) such that \(\sum_{k=1}^{E_i}a_i<\alpha i\), we have the bound
  \begin{align*}
    &\mathbb{P}\left(\left\{\omega\colon \frac{\#\{j\leq E_i\colon
    	\pi\prod_{k=1}^j \lambda_k \xi_{i,l} \in V_{\theta}+ \mathbb Z\pi   \}}{i} <
    \alpha\right\} \cap B_{\eps}(n)\right)\\
  \leq&\sum_{\ell=0}^{\floor{\alpha i}}\binom{E_i}{\ell}\cdot\prod_{k=n+1}^{E_i}
    \left(1+\frac{3\lambda_g^{\eps^2 k}}{\Delta} \right)\cdot
    \left(1-\frac{2\theta}{\pi}\right)^{\ell}\cdot \left(\frac{2\theta}{\pi}\right)^{E_i-n-\ell}.
  \end{align*}
  Recall that \(E_i=\floor{(1-\eps)i}\leq i\) and therefore
\(\binom{E_i}{l}\leq \binom{i}{l}\).
  Using this bound we have
  \begin{align*}
   &\sum_{\ell=0}^{\floor{\alpha i}}\binom{E_i}{\ell}\cdot\prod_{k=n+1}^{E_i}
   \left(1+\frac{3\lambda_g^{\eps^2 k}}{\Delta} \right)\cdot
   \left(1-\frac{2\theta}{\pi}\right)^{\ell}\cdot \left(\frac{2\theta}{\pi}\right)^{E_i-n-\ell}\\
  \leq&\sum_{\ell=0}^{\floor{\alpha i}}\binom{i}{\ell}\cdot\prod_{k=n+1}^{E_i}
    \left(1+\frac{3\lambda_g^{\eps^2 k}}{\Delta} \right)\cdot
    \left(1-\frac{2\theta}{\pi}\right)^{\ell}\cdot \left(\frac{2\theta}{\pi}\right)^{E_i-n-\ell}\\
    =& \left(\frac{2\theta}{\pi}\right)^{E_i-n-i} \cdot\prod_{k=n+1}^{E_i}
    \left(1+\frac{3\lambda_g^{\eps^2 k}}{\Delta} \right)\cdot \sum_{\ell=0}^{\floor{\alpha
  i}}\binom{i}{\ell} \left(1-\frac{2\theta}{\pi}\right)^{\ell}\left(\frac{2\theta}{\pi}\right)^{i-\ell}.
  \end{align*}
  We have $$\prod_{k=1}^{E_i}\left(1+\frac{3\lambda_g^{\eps^2 k}}{\Delta} \right)\le
  \prod_{k=1}^{\infty}\left(1+\frac{3\lambda_g^{\eps^2 k}}{\Delta} \right)=:C<\infty.$$ Using this bound and collecting the above estimates, we have shown that 
  \begin{align}
  	\label{eq:before Chernoff}
    &\mathbb{P}\left(\left\{\omega\colon \frac{\#\{j\leq E_i\colon
    	\pi\prod_{k=1}^j \lambda_k \xi_{i,l} \in V_{\theta}+ \mathbb Z\pi   \}}{i} <
    \alpha\right\} \cap B_{\eps}(n)\right)\nonumber \\
    \le& C \left(\frac{2\theta}{\pi}\right)^{E_i-n-i} \cdot
  \sum_{\ell=0}^{\floor{\alpha i}}\binom{i}{\ell}
    \left(1-\frac{2\theta}{\pi}\right)^{\ell}\left(\frac{2\theta}{\pi}\right)^{i-\ell}.
  \end{align}

Observe now that $$\sum_{\ell=0}^{\floor{\alpha i}}\binom{i}{\ell}
  \left((1-\frac{2\theta}{\pi}\right)^{\ell}\left(\frac{2\theta}{\pi}\right)^{i-\ell}$$ is the binomial
  sum of the first \(\floor{\alpha i}+1\) terms in
  \(\left((1-\frac{2\theta}{\pi})+\frac{2\theta}{\pi}\right)^i\).
  Furthermore, since $\alpha<1-\frac{2\theta}{\pi}$, using the Chernoff bound we have
  \begin{equation} 
  	\label{eq:Chernoff}
  \sum_{l=0}^{\floor{\alpha i}}\binom{i}{l}
    \left(1-\frac{2\theta}{\pi}\right)^{l}\left(\frac{2\theta}{\pi}\right)^{i-l}\leq e^{ -i
      \left(\alpha \log\left( \frac{\alpha}{p} \right) + (1 - \alpha) \log\left( \frac{1 - \alpha}{1 -
    p} \right)\right)}.
  \end{equation}
We recall that for notational convenience we let $p=1-\frac{2\theta}{\pi}$. Combining \cref{eq:before Chernoff} with \cref{eq:Chernoff}, we see that for any \(0\leq l \leq X(i)-1\) we have 
  \begin{equation*}
    \begin{aligned}
      &\mathbb{P}\left(\left\{\omega\colon \frac{\#\{j\leq E_i\colon
      	\pi\prod_{k=1}^j \lambda_k \xi_{i,l} \in V_{\theta}+ \mathbb Z\pi   \}}{i} <
      \alpha\right\} \cap B_{\eps}(n)\right)\\
      \leq& C\cdot \left(\frac{2\theta}{\pi}\right)^{E_i-n-i}\cdot   e^{ -i
	\left(\alpha \log\left( \frac{\alpha}{p} \right) + (1 - \alpha) \log\left(
      \frac{1 - \alpha}{1 - p} \right)\right)}.
    \end{aligned}
  \end{equation*}
  This in turn implies that 
  \begin{equation}\label{eq:probbound2}
  	\begin{aligned}
  		&\mathbb{P}\left(\left\{\omega\colon \min_{l\in \{0,\ldots, X(i)-1\}}\frac{\#\{j\leq E_i\colon
  			\pi\prod_{k=1}^j \lambda_k \xi_{i,l} \in V_{\theta}+ \mathbb Z\pi   \}}{i} <
  		\alpha\right\} \cap B_{\eps}(n)\right)\\
  		\leq& C\cdot X(i)\cdot \left(\frac{2\theta}{\pi}\right)^{E_i-n-i} \cdot e^{ -i
  			\left(\alpha \log\left( \frac{\alpha}{p} \right) + (1 - \alpha) \log\left(
  			\frac{1 - \alpha}{1 - p} \right)\right)}.
  	\end{aligned}
  \end{equation}
By \cref{eq:probbound2} and our assumptions on $\theta$, $\alpha$ and $X$, we have\footnote{Notice that because we are summing over $i$ the additional $n$ term appearing in the exponent for $\frac{2\theta}{\pi}$ does not influence the convergence of this sum.} 
$$\sum_{i=1}^{\infty} \mathbb{P}\left(\left\{\omega\colon \min_{l\in \{0,\ldots, X(i)-1\}}\frac{\#\{j\leq E_i\colon
	\pi\prod_{k=1}^j \lambda_k \xi_{i,l} \in V_{\theta}+ \mathbb Z\pi   \}}{i} <
\alpha\right\} \cap B_{\eps}(n)\right)<\infty.$$
Hence, by the Borel-Cantelli Lemma, almost every $\omega\in B_{\epsilon}(n)$ satisfies \cref{eq:uniform count}. As previously remarked, by \cref{PD*} this implies that the desired conclusion holds for almost every $\omega\in B_{\epsilon}(n).$ Since $n$ was arbitrary and $\cup_{n}B_{\epsilon}(n)$  equals $W^{\N}$ up to a set of measure zero, it follows that the desired conclusion holds for almost every $\omega$.  
\end{proof}

The following lemma shows that we can construct a suitable function $X:\N\to\N$. $X$ needs to satisfy two properties. It needs to grow sufficiently slowly that \cref{Lemma PD1} holds, and it needs to grow sufficiently quickly so that having the desired polynomial decay rate along the sequence $(\xi_{i,l})_{i,l}$ is sufficient for establishing the decay rate over the whole of $\R$. With the second property in mind, we need to introduce some notation to formalise the Lipschitz continuity of the $\hat{\mu}_{\omega}$ functions. The Fourier transforms $\widehat\mu_{\omega}$
are Lipschitz continuous and the Lipschitz constant can be chosen uniformly to apply to all in $\omega$, since for all $\omega$, the support
satisfies
$\Lambda_\omega\subset [0, (1-\lambda_{\max})^{-1}]$ (see e.g. \cite[Equation
(3.19)]{mattila_2015}). Let us denote this uniform constant by $H$. So we have 
$$|\hat{\mu}_{\omega}(\xi)-\hat{\mu}_{\omega}(\xi')|\leq H|\xi-\xi'|$$ for all $\xi,\xi'\in\mathbb{R}$ and $\omega\in W^{\N}$.

\begin{lemma}
	\label{Lemma:choose X}
	Let $\eps\in(0,1/2)$. Then there exists $\theta\in(0,\pi/4),$ $\alpha\in (0,1-\frac{2\theta}{\pi})$ and $X:\N\to \N$  such that for all large $i$ large enough we have
	\begin{itemize}
		\item[(P1)] \(H\frac{\lambda_g^{-i-1}}{X(i)} < \lambda_g^{i}\),
		\item[(P2)]  \(X(i)\cdot \left(\frac{2\theta}{\pi}\right)^{-\eps i} \cdot e^{ -i \left(\alpha
			\log\left( \frac{\alpha}{p} \right) + (1 - \alpha) \log\left( \frac{1 - \alpha}{1 - p}
			\right)\right)}<e^{- i}\).
	\end{itemize}
\end{lemma}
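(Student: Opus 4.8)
The plan is to fix the parameters $\alpha$ and $\theta$ first and to choose $X$ only afterwards, because the two requirements pull in opposite directions. Property (P1) is equivalent to $X(i)>H\lambda_g^{-2i-1}$, so it forces $X$ to grow at least exponentially with ratio $\lambda_g^{-2}>1$; property (P2) caps the growth rate of $X$ and simultaneously ties $\theta$ to $\alpha$. I would set $\alpha:=\eps$. Since $\eps<\tfrac12$ and every $\theta\in(0,\tfrac\pi4)$ satisfies $1-\tfrac{2\theta}\pi>\tfrac12$, the choice $\alpha=\eps$ automatically lies in $(0,1-\tfrac{2\theta}\pi)$, hence is admissible no matter how small $\theta$ is taken later. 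The reason this is the right choice is the following estimate: writing $p:=1-\tfrac{2\theta}\pi$ and $T:=\log\tfrac\pi{2\theta}$, the adversarial factor in (P2) is $\left(\tfrac{2\theta}\pi\right)^{-\eps i}=e^{\eps iT}$, while, using $1-p=\tfrac{2\theta}\pi$ together with $p<1$ (so $\log p<0$),
\[
  \alpha\log\tfrac\alpha p+(1-\alpha)\log\tfrac{1-\alpha}{1-p}\;\ge\;\alpha\log\alpha+(1-\alpha)\log(1-\alpha)+(1-\alpha)T\;=\;(1-\alpha)T+B,
\]
where $B:=\alpha\log\alpha+(1-\alpha)\log(1-\alpha)<0$ is a finite constant depending only on $\eps$. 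Since $\alpha=\eps<\tfrac12$ gives $1-\alpha>\eps$, the Chernoff exponent in (P2) beats the adversarial factor by a surplus linear in $T$; this surplus is what will absorb the exponential growth of $X$ forced by (P1).

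Next I would put $X(i):=\lceil\lambda_g^{-3i}\rceil$, so that $X\colon\N\to\N$ and $\log X(i)\le 3iL+\log 2$, where $L:=\log(1/\lambda_g)>0$. Property (P1) then holds for all large $i$: it amounts to $X(i)\ge\lambda_g^{-3i}>H\lambda_g^{-2i-1}$, i.e.\ $\lambda_g^{1-i}>H$, which holds for $i$ large because $\lambda_g<1$. For (P2), take logarithms: combining the bound on $\log X(i)$ with the estimate above, the logarithm of the left-hand side of (P2) is at most
\[
  3iL+\log 2+\eps iT-i\big((1-\eps)T+B\big)\;=\;\log 2+i\big(3L+(2\eps-1)T-B\big).
\]
Since $2\eps-1<0$ and $B-3L-1<0$, I would now choose $\theta\in(0,\tfrac\pi4)$ small enough that $T=\log\tfrac\pi{2\theta}$ exceeds the finite, positive number $\tfrac{B-3L-1}{2\eps-1}$; then $3L+(2\eps-1)T-B<-1$, so the displayed quantity is less than $-i$ for all sufficiently large $i$, which is precisely (P2). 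Feeding these $\theta,\alpha$ and this $X$ into \cref{PD*} and \cref{Lemma PD1} then also produces a concrete decay exponent $\rho=\tfrac{\eps\log\cos\theta}{2\log\lambda_g}>0$.

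I do not anticipate a substantive obstacle here. The real content is the bookkeeping observation that shrinking $\theta$ inflates the Chernoff exponent $\alpha\log\tfrac\alpha p+(1-\alpha)\log\tfrac{1-\alpha}{1-p}$ at rate $(1-\alpha)T$ but inflates the adversarial factor $\left(\tfrac{2\theta}\pi\right)^{-\eps i}$ only at rate $\eps T$, so any $\alpha\in(0,1-\eps)$ — in particular $\alpha=\eps$, which additionally respects the admissibility ceiling $1-\tfrac{2\theta}\pi>\tfrac12$ — wins once $\theta$ is small. The only delicate point is the order of quantifiers: the coefficient of $T$ in the final bracket, namely $2\eps-1$, is independent of $X$, but the additive constant $3L$ records the exponential growth rate of $X$, so $\theta$ must be fixed \emph{after} $X$. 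One could equally take a slightly slower-growing $X$ (anything dominating $H\lambda_g^{-2i-1}$ works), at the cost only of a different threshold for $\theta$.
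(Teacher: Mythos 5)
Your proof is correct and follows essentially the same route as the paper: both reduce (P1)--(P2) to an exponential-rate inequality and then exploit that, since $\eps<1/2$, shrinking $\theta$ inflates the Chernoff exponent at rate $(1-\alpha)\log\frac{\pi}{2\theta}$ while the penalty factor only grows at rate $\eps\log\frac{\pi}{2\theta}$, so the surplus absorbs the exponential growth of $X$ forced by (P1). Your explicit choices $\alpha=\eps$ and $X(i)=\lceil\lambda_g^{-3i}\rceil$ merely replace the paper's $\alpha=p/2$ and its implicit choice of an integer $X(i)$ lying between $H\lambda_g^{-2i-1}$ and the right-hand bound; the mechanism is identical.
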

\begin{proof}
Rearranging $(P1)$ and $(P2),$ we see that our result follows if we can find $\theta,\alpha$ and $X$ such that
\begin{equation}
	\label{eq:reduction}
	H\lambda_g^{-2i-1}<X(i)<\left(\frac{2\theta}{\pi}\right)^{\eps i} \cdot e^{ i
		\left(\alpha \log\left( \frac{\alpha}{p} \right) + (1 - \alpha) \log\left( \frac{1 - \alpha}{1 -
			p} \right)\right)}\cdot e^{- i} 
\end{equation}for all $i$ sufficiently large. Suppose now that we can find $\theta$ and $\alpha$ such that 
\begin{equation}
	\label{eq:Find X}
	 H\lambda_g^{-2i-1}+1<\left(\frac{2\theta}{\pi}\right)^{\eps i} \cdot e^{ i
		\left(\alpha \log\left( \frac{\alpha}{p} \right) + (1 - \alpha) \log\left( \frac{1 - \alpha}{1 -
			p} \right)\right)}\cdot e^{-i} 
\end{equation} for all $i$ sufficiently large. Then we can find a function $X$ so that \cref{eq:reduction} holds for all $i$ sufficiently large. We simply choose any integer between the left hand side of \cref{eq:reduction} and the right hand side of \cref{eq:reduction} then let $X(i)$ equal this integer. Thus to complete our proof we need to show that \cref{eq:Find X} holds for all $i$ sufficiently large. Taking logarithms, dividing by $i$, then taking the limit as $i\to\infty,$ we see that \cref{eq:Find X} holds for $i$ sufficiently large if $\theta$ and $\alpha$ are such that
\begin{equation} \label{eq:pdB}
-2\log\lambda_g< \alpha \log\left( \frac{\alpha}{p} \right) + (1 - \alpha) \log\left(
\frac{1 - \alpha}{1 - p} \right)+\eps\log (1-p)-1.
\end{equation} We can rewrite \cref{eq:pdB} in the equivalent form
\begin{equation} \label{eq:pdC}
	-2\log\lambda_g< \alpha \log\left( \frac{\alpha}{p} \right) + (1 - \alpha) \log\left(
	1 - \alpha \right)+(\eps-1+\alpha)\log (1-p)-1.
\end{equation}
We now set $\alpha=p/2$ (recall that $p=1-\frac{2\theta}{\pi})$ so \cref{eq:pdC} becomes
\begin{equation} \label{eq:pdD}
	-2\log\lambda_g< \frac{p}{2} \log\left( \frac{1}{2} \right) + \left(1 - \frac{p}{2}\right) \log\left(
	1 - \frac{p}{2} \right)+\left(\eps-1+\frac{p}{2}\right)\log (1-p)-1.
\end{equation}
Then as $\theta\to 0$ and therefore $p\to 1$, the right hand side of \cref{eq:pdC} tends to infinity. This is because for $\epsilon\in (0,1/2)$ we have $$\lim_{p\to 1}\left(\eps-1+\frac{p}{2}\right)\log (1-p)=\infty,$$ and the other terms on the right hand side of \cref{eq:pdD} remain bounded as $p\to 1$. Since the left hand side of \cref{eq:pdD} does not depend upon $\theta$, we can therefore choose $\theta$ such that \cref{eq:pdD} holds. For this value of $\theta,$ if we take $\alpha=p/2$, then it follows from the above that \cref{eq:pdC} holds. This completes our proof. 
\end{proof}

Equipped with \cref{Lemma PD1} and \cref{Lemma:choose X} we can now complete the proof of \cref{main3}.

\begin{proof}[Proof of \cref{main3}]
	Let $\epsilon\in (0,1/2)$. Then by \cref{Lemma:choose X} there exists $\theta\in(0,\pi/4),$ $\alpha\in (0,1-\frac{2\theta}{\pi})$ and $X:\N\to \N$ such that $(P1)$ and $(P2)$ of this lemma are satisfied for $i$ sufficiently large. For these choices of $\epsilon, \theta, \alpha$ and $X$, if follows from (P2) that 
	 $$\sum_{i=1}^{\infty}X(i)\cdot \left(\frac{2\theta}{\pi}\right)^{-\eps i} \cdot e^{ -i
		\left(\alpha \log\left( \frac{\alpha}{p} \right) + (1 - \alpha) \log\left( \frac{1 - \alpha}{1 -
			p} \right)\right)}<\infty.$$ Then by \cref{Lemma PD1}, there exists $\rho>0$ depending upon $\theta$, $\alpha$ and $\lambda_{g},$ such that for almost all $\omega$ there exists $K_{\omega}>0$ depending upon $\omega$ such that for all $i>K_{\omega}$ and $l\in \{0,\ldots, X(i)-1\}$ we have 
		\begin{equation}
			\label{eq:subsequence decay}		|\hat{\mu}_{\omega}(\xi_{i,l})|\leq \xi_{i,l}^{-\rho}.
			\end{equation}
			Let us now fix such a $\rho.$ Let us now also make an arbitrary choice of $\omega$ belonging to the full measure set for which \ref{eq:subsequence decay} holds for all $i>K_{\omega}$ and $l\in \{0,\ldots, X(i)-1\}$. Let $\xi \in I_{i}$ for some $i>K_{\omega}$ be arbitrary. Then there exists $l\in \{0,\ldots,X(i)-1\}$ such that 
			\begin{equation}
				\label{eq:close frequency}
				|\xi-\xi_{i,l}|\leq \frac{\lambda_{g}^{-i-1}-\lambda_{g}^{-i}}{X(i)}.
			\end{equation}
			 For this choice of $l$, it follows from \cref{eq:subsequence decay}, \cref{eq:close frequency}, and the Lipschitz property of $\hat{\mu}_{\omega}$ that
			\begin{equation}
				\label{eq:Part way to decay}
				|\hat{\mu}_{\omega}(\xi)|\leq |\hat{\mu}_{\omega}(\xi_{i,l})|+H \frac{\lambda_{g}^{-i-1}-\lambda_{g}^{-i}}{X(i)}\leq \xi_{i,l}^{-\rho}+ +H \frac{\lambda_{g}^{-i-1}}{X(i)}.
			\end{equation} Since $\xi,\xi_{i,l}\in I_{i}$ it follows that $\xi\leq \xi_{i,l}\lambda_{g}^{-1}$ and $\xi\leq \lambda_{g}^{-i-1}$. Using these inequalities together with $(P1)$, it follows from \cref{eq:Part way to decay} that 
			$$|\hat{\mu}_{\omega}(\xi)|\leq \frac{\xi^{-\rho}}{\lambda_{g}^{\rho}}+\lambda_{g}^{i}\leq \frac{\xi^{-\rho}}{\lambda_{g}^{\rho}}+\frac{\xi^{-1}}{\lambda_{g}}.$$ Since $\xi\in I_{i}$ for some $i>K_{\omega}$ was arbitrary, if follows that for  $\rho'=\min \{\rho,1\}$ and $C>0$ sufficiently large, we have $|\hat{\mu}_{\omega}(\xi)|\leq C\xi^{-\rho'}$ for all $\xi>0$. Our proof is almost complete, it remains to consider those $\xi<0$. Using \cref{lem:form} it follows that $|\hat{\mu}_{\omega}(\xi)|=|\hat{\mu}_{\omega}(-\xi)|$ for any $\xi \in \R$. Thus for this choice of $\rho'$ and $C$ we in fact have $|\hat{\mu}_{\omega}(\xi)|\leq C|\xi|^{-\rho'}$ for all $\xi\neq 0.$ Since $\omega$ was an arbitrary choice from a full measure set our result follows.
\end{proof}

\bibliographystyle{alpha}
\bibliography{ref}

\end{document}